 %Submitted to 
%
\documentclass[11pt]{amsart}
\usepackage{amssymb,amsmath,amsthm,newlfont,enumerate}
\setlength{\topmargin}{-0.5cm}
\setlength{\topskip}{0cm}
\setlength{\oddsidemargin}{0.0cm}
\setlength{\evensidemargin}{0.0cm}
\setlength{\textwidth}{16.4cm}
\setlength{\textheight}{22.5cm}
\setlength{\headsep}{0.8cm}
\setlength{\jot}{4.5pt}
\setlength{\footskip}{1cm}

\newtheorem{prop}{Proposition}[section]
\newtheorem{thm}[prop]{Theorem}
\newtheorem{lem}[prop]{Lemma}
\newtheorem{cor}[prop]{Corollary}
\theoremstyle{remark}

\newtheorem*{rems}{Remarks}

\newcommand{\cB}{{\cal B}}
\newcommand{\cD}{{\cal D}}
\newcommand{\cN}{{\cal N}}
\newcommand{\CC}{{\mathbb C}}
\newcommand{\DD}{{\mathbb D}}
\newcommand{\RR}{{\mathbb R}}
\newcommand{\TT}{{\mathbb T}}

\newcommand{\lt}{<}
\newcommand{\gt}{>}

\renewcommand{\tilde}{\widetilde}

\begin{document}

\title[Cyclicity in the Dirichlet space]
{On the Brown--Shields conjecture for cyclicity in the Dirichlet space}

\author[El-Fallah]{Omar El-Fallah}
\address{D\'{e}partement de Math\'ematiques\\
\\Universit\'e Mohamed V\\B.\ P.\ 1014 Rabat\\Morocco}
\email{elfallah@fsr.ac.ma}

\author[Kellay]{Karim Kellay}
\address{CMI\\LATP\\Universit\'e de Provence\\
39, rue F. Joliot-Curie\\13453 Marseille cedex 13\\France}
\email{kellay@cmi.univ-mrs.fr}

\author[Ransford]{Thomas Ransford}
\address{D\'{e}partement de math\'{e}matiques et de statistique
\\Universit\'{e} Laval\\Qu\'{e}bec (QC) \\ Can\-a\-da G1V 0A6}
\email{ransford@mat.ulaval.ca}

\thanks{The research of the second author was supported in part by ANR Dynop.
The research of the third author was partially supported 
by grants from NSERC (Canada), FQRNT (Qu\'ebec), and the Canada research chairs program.} 

\keywords{Dirichlet integral, Dirichlet space, invariant subspace, cyclic function,   outer function, logarithmic capacity, Brown--Shields conjecture,
Bergman-Smirnov exceptional set} 

\subjclass[2000]{primary 30H05; secondary 46E20, 47A15.}

\begin{abstract}
Let $\cD$ be the Dirichlet space, 
namely the space of holomorphic functions on the unit disk
whose  derivative is square-integrable. 
We establish a new sufficient condition for a function
$f\in\cD$ to be {\em  cyclic}, 
i.e.\ for $\{pf: p\text{ a polynomial}\}$ to be dense in $\cD$. 
This allows us to prove a special case of the conjecture of Brown and Shields
that a function is cyclic in $\cD$ iff it is outer and its zero set 
(defined appropriately) is of capacity zero.
\end{abstract}

\date{17 September 2008}

\maketitle

\section{Introduction}\label{S:intro}

Let $f$ be a holomorphic function on the unit disk $\DD$. 
The {\em Dirichlet integral} of $f$ is defined by
$$
\cD(f):=\frac{1}{\pi}\iint_\DD |f'(z)|^2\,dx\,dy.
$$
The {\em Dirichlet space} $\cD$ 
is the space of  holomorphic functions
$f$ on $\DD$ such that $\cD(f)\lt\infty$. 
It becomes a Hilbert space under the norm $\|\cdot\|_\cD$ defined by
$\|f\|_\cD^2:=\|f\|_{H^2}^2+\cD(f)$.

A subspace $M$ of $\cD$ is called \textit{invariant} 
if $f(z)\in M\implies zf(z)\in M$.
Given $f\in\cD$, we denote by $[f]$ 
the smallest closed, invariant subspace containing $f$,
namely the closure in $\cD$ of the set $\{pf:p\text{~is a polynomial}\}$.
We say that $f$ is \textit{cyclic} if $[f]=\cD$.  

It is a long-standing open problem to characterize the cyclic functions in $\cD$. 
Brown and Shields  showed in \cite{BS} that, if $f\in\cD$ is cyclic, 
then necessarily $f$ is an outer function  and 
the set $\{\zeta\in\TT:\lim_{r\to1^{-}}f(r\zeta)=0\}$ 
is  of logarithmic capacity zero.
They  conjectured that these two necessary conditions for cyclicity 
are between them also sufficient \cite[Question~12]{BS}. 
The article \cite{EKR} contains a brief history of the 
progress made towards proving this conjecture, 
and we shall have more to say about this at the end of the paper.

Given  $E\subset\TT$ and $t\ge0$, 
we write $E_t:=\{\zeta\in\TT:d(\zeta,E)\le t\}$, 
where $d$ denotes arclength distance on the unit circle $\TT$. 
Also, we write $|E_t|$ for the Lebesgue measure of $E_t$. 
The following theorem is our main  result.
 
\begin{thm}\label{T:main}
Let $f\in\cD$ be an outer function, and set
$E:=\{\zeta\in\TT:\liminf_{z\to \zeta}|f(z)|=0\}$. 
Suppose that
$|E_t|=O(t^\mu)$ as $t\to0$
for some $\mu\gt0$, and that
\begin{equation}\label{E:capcond}
\int_0^\pi\frac{dt}{|E_t|}=\infty.
\end{equation}
Then $f$ is cyclic in $\cD$.
\end{thm}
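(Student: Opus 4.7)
The strategy is to prove $1\in[f]$: this is equivalent to cyclicity, because $[f]$ is shift-invariant so it would then contain every polynomial, and polynomials are dense in $\cD$. I would construct a sequence $(g_n)\subset[f]$ with $g_n\to 1$ in $\cD$, taking $g_n=\varphi_n f$ where $\varphi_n\in\cD\cap H^\infty$ is a suitably regularised version of $1/f$. A natural candidate is to let $\varphi_n$ be the outer function whose boundary modulus is a truncation such as $\min(1/|f|,n)$ (or one tailored to vanish on $E_{t_n}$ for a well-chosen scale $t_n$), so that $\varphi_n f$ is outer with $|\varphi_n f|\to 1$ a.e.\ on $\TT$.

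The main analytic task is to prove $\cD(1-\varphi_n f)\to 0$. Via the Douglas formula this becomes a double boundary integral whose dominant contribution comes from pairs of points close to $E$, where $|\varphi_n f|$ differs substantially from $1$. The hypothesis $|E_t|=O(t^\mu)$ supplies the Minkowski-type thinness of $E$ needed to ensure convergence of the relevant integrals, while \eqref{E:capcond}, which for regular $E$ is essentially the logarithmic-capacity-zero condition, is what allows the error to be driven to zero by matching the truncation parameter to the geometry of $E$: the divergence of $\int_0^\pi dt/|E_t|$ should manifest as room to pick a sequence of cutoff scales whose cumulative effect beats the Dirichlet error down.

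Separately, one must verify $g_n=\varphi_n f\in[f]$. Since $\varphi_n\in H^\infty$ one approximates it by polynomials $p_k$ (Cesàro or de la Vallée-Poussin sums) and checks $\cD(p_k f-\varphi_n f)\to 0$. This is a multiplier-style estimate: writing $(p_k f-\varphi_n f)'=(p_k-\varphi_n)'f+(p_k-\varphi_n)f'$, one needs a Carleson-type bound against $|f|^2\,dA$ and $|f'|^2\,dA$, taking advantage of the fact that $\varphi_n$ has been built to be well behaved near $E$.

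The principal obstacle is the Douglas-integral estimate in the second step. The double integral is delicate precisely where both arguments lie in a small neighbourhood of $E$, and here \eqref{E:capcond} must be used sharply, balancing the decay of $|E_t|$ against the divergence of $\int dt/|E_t|$ so that the correct scale can be selected for each $n$. A secondary difficulty is the multiplier-approximation step of the third paragraph: generic polynomial approximation of $\varphi_n$ in $H^\infty$ or in $H^2$ is in general too weak to give $\cD$-convergence after multiplication by $f$, so one may have to choose approximants adapted to $f$ (for instance, exploiting the outerness and the specific structure of $\varphi_n$) rather than using all-purpose kernels.
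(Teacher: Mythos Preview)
Your plan has a genuine structural gap: you propose to build $\varphi_n$ from the boundary data $|f^*|$ (e.g.\ $|\varphi_n^*|=\min(1/|f^*|,n)$) and then control the Douglas/Carleson double integral of $1-\varphi_nf$. But the hypotheses of the theorem say nothing about $|f^*|$; they speak only of the set $E$ and the measures $|E_t|$. The sublevel sets $\{|f^*|<1/n\}$ have no a~priori relation to the $E_t$, and $|f^*|$ need not be comparable to any function of $d(\zeta,E)$. So there is no visible mechanism for turning the assumptions $|E_t|=O(t^\mu)$ and $\int_0^\pi dt/|E_t|=\infty$ into the double-integral estimate you describe. The paper's central move is precisely to decouple from $f$: using a Korenblum-type argument together with a fusion lemma, it proves that \emph{any} $g\in\cD$ with $|g^*(\zeta)|\le d(\zeta,E)$ a.e.\ already lies in $[f]$ (Theorem~\ref{T:Korenblum}). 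After that, one works entirely with outer ``distance functions'' $f_w$ whose boundary modulus is $w(d(\zeta,E))$ for explicit profiles $w$; the Carleson integral then reduces to a one-variable estimate in terms of $w$ and $|E_t|$ (Theorem~\ref{T:Dint}), which is where the hypotheses actually enter.

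Two smaller points. First, you aim for norm convergence $\cD(1-g_n)\to0$, whereas the paper only shows $\liminf\cD(f_{w_\delta})<\infty$ together with a.e.\ boundary convergence, and then invokes weak compactness of bounded sets in the Hilbert space $\cD$ (Lemma~\ref{L:closure}); this is a substantially lighter burden. Second, your proposed route to $\varphi_nf\in[f]$ via polynomial approximation of $\varphi_n$ and multiplier estimates is unnecessary and, as you note yourself, fragile: once $\varphi_nf\in\cD$ is known, the inclusion follows immediately from the Richter--Sundberg result (Theorem~\ref{T:RS1}(ii)), since $|\varphi_n^*f^*|\le n|f^*|$ a.e.\ and $f$ is outer. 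Finally, the paper also needs a nontrivial regularisation of $t\mapsto|E_t|$ via a rising-sun argument (Theorem~\ref{T:reg}) in order to construct profiles $w_\delta$ that are simultaneously smooth enough for the Dirichlet estimate and still satisfy the divergence condition; your sketch does not anticipate this ingredient.
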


\begin{rems}
(i) A compact set $E\subset\TT$ satisfying  the condition \eqref{E:capcond} 
automatically has capacity zero.
This follows, for example, from \cite[\S IV, Theorem~2]{Ca3}.

(ii) For certain types of set, condition \eqref{E:capcond} 
is actually equivalent to capacity zero.
Let $(l_n)_{n\ge0}$ be a sequence in $(0,2\pi)$ 
such that $\lambda:=\sup_{n\ge0}{l_{n+1}}/l_n\lt1/2$, 
and let $E$ be the associated generalized Cantor set. 
(Thus, we begin with a closed arc of length
$l_0$, remove an open arc from the middle to leave two closed arcs of length $l_1$, 
remove open arcs from their middles to
leave four arcs of length $l_2$, etc.; 
then $E$ is the intersection of the resulting nested sequence of sets.)
Then \eqref{E:capcond} holds if and only if $E$ is of capacity zero:
see for example \cite[\S IV, Theorem~3]{Ca3} and its proof.
Moreover, it is easy to see that $|E_t|=O(t^\mu)$ as $t\to0$, 
where $\mu=1-\log 2/\log(1/\lambda)$. 
Thus we deduce the following result, which proves a special case of the
Brown--Shields conjecture.
\end{rems}

\begin{cor}\label{C:BS}
Let $f\in\cD$. 
Assume that $|f|$ extends continuously to $\overline{\DD}$ 
and that $E:=\{\zeta\in\TT:|f(\zeta)|=0\}$ is a generalized Cantor set 
in the sense defined above. 
Then $f$ is cyclic if and only if $f$ is outer and $E$ is of capacity zero.
\end{cor}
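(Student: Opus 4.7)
The plan is to deduce the corollary from Theorem~\ref{T:main} (for sufficiency) combined with the Brown--Shields necessity result \cite{BS} (for necessity).

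For necessity, if $f \in \cD$ is cyclic then by Brown--Shields $f$ must be outer and the radial-limit zero set $\{\zeta \in \TT : \lim_{r\to 1^-}f(r\zeta)=0\}$ must have capacity zero. Since $|f|$ is continuous on $\overline{\DD}$, the radial limit of $|f|$ at $\zeta$ equals $|f(\zeta)|$, so this radial-limit zero set coincides with $E = \{\zeta\in\TT:|f(\zeta)|=0\}$, which is therefore of capacity zero.

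For sufficiency, assume $f$ is outer and $E$ is a generalized Cantor set of capacity zero. The task is to verify the hypotheses of Theorem~\ref{T:main}. Continuity of $|f|$ on $\overline{\DD}$ gives $\liminf_{z \to \zeta} |f(z)| = |f(\zeta)|$ for every $\zeta \in \TT$, so the set $E$ appearing in the corollary coincides with the set $E$ appearing in Theorem~\ref{T:main}. The two remaining hypotheses---the estimate $|E_t| = O(t^\mu)$ with $\mu = 1 - \log 2/\log(1/\lambda) \gt 0$, and the divergence~\eqref{E:capcond} being equivalent to capacity zero---are precisely the content of remark~(ii) for generalized Cantor sets with ratio $\lambda \lt 1/2$. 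With all hypotheses of Theorem~\ref{T:main} in place, the cyclicity of $f$ follows immediately.

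There is no substantive obstacle, since the real work is already packaged into Theorem~\ref{T:main} and the preceding remarks. The only conceptual point worth flagging is that the continuity assumption on $|f|$ is exactly what permits the identification of the pointwise zero set $\{|f(\zeta)|=0\}$ with the $\liminf$-defined set used in Theorem~\ref{T:main}; without such a regularity hypothesis, the reduction would leave a gap, and one would need separately to argue that the two notions of boundary zero set are of equal capacity.
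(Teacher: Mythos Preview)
Your proposal is correct and follows the same approach as the paper: sufficiency via Theorem~\ref{T:main} together with the remarks on generalized Cantor sets, necessity via Brown--Shields. You are in fact more explicit than the paper in articulating why the continuity of $|f|$ is needed to identify the pointwise zero set $E$ with both the radial-limit zero set (for necessity) and the $\liminf$ set (for sufficiency); the paper leaves this implicit.
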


\begin{proof}
In view of the remarks above, the sufficiency follows from Theorem~\ref{T:main}.
Necessity comes from the results of Brown and Shields \cite[Theorem~5]{BS}.
\end{proof}

The rest of the paper is devoted to the proof of Theorem~\ref{T:main}.
We begin in \S\ref{S:background} by recalling some basic background on
the Dirichlet space.
Then,  in \S\ref{S:fusion}, 
we prove a general theorem about invariant subspaces of $\cD$,
based on a technique of Korenblum and on a  fusion lemma for $\cD$.
In \S\ref{S:distance} we establish an estimate for the Dirichlet integral
of so-called distance functions,
namely outer functions $f$ whose boundary values 
$|f^*(\zeta)|$ depend only on $d(\zeta,E)$. 
In \S\ref{S:sun}, 
we prove a regularization theorem, 
related to the rising-sun lemma of F.~Riesz,
which is needed for smoothing the function $|E_t|$.
These ingredients are then combined
in \S\ref{S:proof} to complete the proof of Theorem~\ref{T:main}.
Finally, in \S\ref{S:BS}, we relate our results to previous work
in the area.

\subsection*{Acknowledgment}
The authors are grateful to Sasha Borichev 
for several very helpful discussions on this topic.

\section{Background on the Dirichlet space}\label{S:background}

In this section we briefly recall some basic notions about the Dirichlet space, and collect together a few results that will be needed in what follows.
For general facts concerning Hardy spaces, we refer to the books of
Garnett \cite{Ga} and Koosis \cite{Koo}. Results about the the Dirichlet
space will be cited in detail below. 
The article of Ross \cite{Ro} is a general survey of the Dirichlet space.

A first remark is that, if $f(z)=\sum_{k\ge0}a_kz^k$, then
$\cD(f)=\sum_{k\ge0}k|a_k|^2$.
It follows immediately that $\cD$ is a subspace of the Hardy space $H^2$. 
The inclusion map $\cD\hookrightarrow H^2$ is compact with dense range.

Given a holomorphic function $f$ on $\DD$ and $\zeta\in\TT$, 
we write $f^*(\zeta):=\lim_{r\to1^{-}}f(r\zeta)$
whenever this limit exists. 
We say that $f$ is {\em inner} if $f$ is bounded and $|f^*|=1$ a.e.\ on $\TT$. 
We say that $f$ is {\em outer} if it is of the form
$$
f(z)=\exp\Bigl(\frac{1}{2\pi}\int_\TT \frac{\zeta+z}{\zeta-z}\log \phi(\zeta)\,|d\zeta|\Bigr)
\qquad(z\in\DD),
$$
where $\phi$ is a positive function such that $\log\phi\in L^1(\TT)$. 
In this case $f^*$ exists a.e.\ on $\TT$ and $|f^*|=\phi$ a.e.\ on $\TT$.
Every $f\in H^2$ factorizes uniquely as $f=f_if_o$, 
where $f_i,f_o\in H^2$ with $f_i$ inner and $f_o$ outer. 

\begin{thm}\label{T:Dio}
Let $f\in\cD$ and let $f=f_if_o$ be the inner-outer factorization of $f$.
Then $f_o\in\cD$,
and $\cD(f_o)\le\cD(f)$ (but $f_i\notin\cD$ in general).
\end{thm}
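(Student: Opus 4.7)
The plan is to derive $\cD(f_o)\le\cD(f)$ from a decomposition of the Dirichlet integral in which the inner factor's contribution appears as an explicit nonnegative term; this simultaneously forces $f_o\in\cD$. I would work through the local Dirichlet integral framework of Richter and Sundberg. Starting from the Douglas identity
$$
\cD(f)=\frac{1}{4\pi^2}\iint_{\TT^2}\frac{|f^*(\zeta)-f^*(\eta)|^2}{|\zeta-\eta|^2}\,|d\zeta|\,|d\eta|,
$$
set
$$
D_\zeta(f):=\frac{1}{2\pi}\int_{\TT}\frac{|f^*(\eta)-f^*(\zeta)|^2}{|\zeta-\eta|^2}\,|d\eta|,
$$
so that $\cD(f)=\tfrac{1}{2\pi}\int_{\TT}D_\zeta(f)\,|d\zeta|$.

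The key input is the Richter--Sundberg identity: for any inner function $\psi$ and any $g\in H^2$,
$$
D_\zeta(\psi g)=D_\zeta(g)+|g^*(\zeta)|^2\,D_\zeta(\psi)\qquad\text{for a.e.\ }\zeta\in\TT.
$$
Applied to $f=f_i f_o$ and integrated over $\TT$, this produces
$$
\cD(f)=\cD(f_o)+\frac{1}{2\pi}\int_{\TT}|f_o^*(\zeta)|^2\,D_\zeta(f_i)\,|d\zeta|,
$$
whose second term is manifestly nonnegative; the theorem follows.

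The main obstacle is the Richter--Sundberg identity itself, which I would either cite or derive by expanding $|\psi^*(\eta)g^*(\eta)-\psi^*(\zeta)g^*(\zeta)|^2$ with the aid of $|\psi^*|\equiv 1$ a.e.\ and then dispatching the resulting cross-term via the Cauchy-integral properties of inner functions. A self-contained alternative is Carleson's representation formula, which decomposes $\cD(f)$ as an outer piece depending only on $|f^*|$ plus nonnegative Blaschke and singular pieces; the same comparison emerges at once, and this formula also makes transparent the parenthetical ``$f_i\notin\cD$ in general'', since for any nontrivial singular inner factor the singular piece is infinite.
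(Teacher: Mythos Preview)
Your proposal is correct. The paper gives no self-contained argument here; it simply cites Carleson \cite{Ca2}, whose representation formula is precisely your ``self-contained alternative'': $\cD(f)$ decomposes as the outer double integral of Theorem~\ref{T:Carleson1} (which equals $\cD(f_o)$, since it depends only on $|f^*|=|f_o^*|$) plus manifestly nonnegative Blaschke and singular contributions, and the inequality $\cD(f_o)\le\cD(f)$ drops out.

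Your primary route through the Richter--Sundberg local identity $D_\zeta(\psi g)=D_\zeta(g)+|g^*(\zeta)|^2\,D_\zeta(\psi)$ is a later, pointwise refinement of the same decomposition (it is proved in \cite{RS1}, the very reference you would cite). Integrating it over $\TT$ recovers Carleson's global statement. The RS formulation buys finer local information and isolates the cross-term cancellation cleanly; Carleson's original is more direct for the bare inequality and is historically the source the paper invokes. Either path is sound, and your remark on the parenthetical---that a nontrivial singular inner factor makes the singular contribution diverge, so $f_i\notin\cD$---is also correct.
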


\begin{proof}
See \cite{Ca2}.
\end{proof}

We shall make extensive use of the following formula of Carleson \cite{Ca2}. 

\begin{thm}\label{T:Carleson1}
Let $f$ be an outer function. Then
\begin{equation}\label{E:Carleson1}
\cD(f)=\frac{1}{4\pi^2}\iint_{\TT^2}
\frac{(|f^*(\zeta_1)|^2-|f^*(\zeta_2)|^2)
(\log|f^*(\zeta_1)|-\log|f^*(\zeta_2)|)}
{|\zeta_1-\zeta_2|^2}\,|d\zeta_1|\,|d\zeta_2|.
\end{equation}
\end{thm}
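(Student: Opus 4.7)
The plan is to reduce both sides of \eqref{E:Carleson1} to the common intermediate quantity $\frac{1}{2\pi}\int_\DD \nabla V \cdot \nabla u\,dA$, where I set $u := \log|f|$ (harmonic on $\DD$ since $f$ is outer and hence zero-free), $\phi := |f^*|$, and let $V := P[\phi^2]$ denote the harmonic (Poisson) extension of the $L^1(\TT)$ boundary function $\phi^2$. The starting observation is that, writing $f = e^{u+iv}$ with $v$ a harmonic conjugate of $u$, the Cauchy--Riemann equations give $|f'|^2 = |f|^2|\nabla u|^2 = e^{2u}|\nabla u|^2$, so $\pi\cD(f) = \int_\DD e^{2u}|\nabla u|^2\,dA$.

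For the left-hand side, I apply Green's first identity twice: since $u$ is harmonic, $\int_\DD \nabla G \cdot \nabla u\,dA = \int_\TT G\,\partial_n u\,|d\zeta|$ for any sufficiently regular $G$. Choosing $G = V$ (whose boundary value is $\phi^2$) yields $\int_\DD \nabla V \cdot \nabla u\,dA = \int_\TT \phi^2\,\partial_n u\,|d\zeta|$, while choosing $G = e^{2u} = |f|^2$ (which has the same boundary value $\phi^2$) gives $2\int_\DD e^{2u}|\nabla u|^2\,dA = \int_\TT \phi^2\,\partial_n u\,|d\zeta|$, i.e.\ $2\pi\cD(f) = \int_\TT \phi^2\,\partial_n u\,|d\zeta|$. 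Combining, $\cD(f) = \frac{1}{2\pi}\int_\DD \nabla V \cdot \nabla u\,dA$.

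For the right-hand side, I use a layer-cake decomposition over the superlevel sets $E_t := \{\zeta \in \TT : \phi(\zeta) > t\}$. The identities $\phi(\zeta)^2 = \int_0^\infty 2t\,\mathbf{1}_{E_t}(\zeta)\,dt$ and $\log\phi(\zeta_1) - \log\phi(\zeta_2) = \int_0^\infty s^{-1}(\mathbf{1}_{E_s}(\zeta_1) - \mathbf{1}_{E_s}(\zeta_2))\,ds$ (the latter well-defined because the integrand vanishes off the compact interval between $\phi(\zeta_1)$ and $\phi(\zeta_2)$) let me factor the numerator of the integrand in \eqref{E:Carleson1} as a double $(s,t)$-integral of $(2t/s)$ times a product of two characteristic-function differences; the crucial point is that this product is pointwise nonnegative, so Fubini applies without any integrability hypothesis. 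Integrating against $|\zeta_1-\zeta_2|^{-2}\,|d\zeta_1||d\zeta_2|$ and applying the polarized Douglas formula $\iint_{\TT^2}(\chi(\zeta_1)-\chi(\zeta_2))(\psi(\zeta_1)-\psi(\zeta_2))|\zeta_1-\zeta_2|^{-2}|d\zeta_1||d\zeta_2| = 2\pi\int_\DD \nabla P[\chi]\cdot\nabla P[\psi]\,dA$ slice-by-slice on $\mathbf{1}_{E_s},\mathbf{1}_{E_t}$, then pushing the $s,t$-integrations back inside the Poisson extension operator, the RHS of \eqref{E:Carleson1} equals $\frac{1}{4\pi^2}\cdot 2\pi\int_\DD \nabla V \cdot \nabla u\,dA = \frac{1}{2\pi}\int_\DD \nabla V \cdot \nabla u\,dA$.

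The main technical obstacle is that, for a general $f \in \cD$, the boundary function $\phi$ is only in $L^2(\TT)$ and $u$ need not be smooth up to $\TT$, so neither Green's identity nor the Poisson-commutation step can be applied as stated. I would handle this by the standard dilation trick: replace $f$ by $f_r(z) := f(rz)$ for $r < 1$, which is holomorphic on a neighborhood of $\overline{\DD}$; carry out the entire argument for $f_r$; and let $r \to 1^-$. On the left, $\cD(f_r) = \sum n|a_n|^2 r^{2n} \uparrow \cD(f)$; on the right, monotone convergence applies thanks to the pointwise nonnegativity of the Carleson integrand. A subsidiary issue is that $\int_0^\infty s^{-1}\nabla U_s\,ds$ is only conditionally convergent; the correct interpretation is to subtract the constant $\mathbf{1}$ from $\mathbf{1}_{E_s}$ on $(0,1)$ (harmless since $\nabla P[1] = 0$), after which the integral converges absolutely and reconstructs $\nabla P[\log\phi] = \nabla u$.
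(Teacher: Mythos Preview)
The paper does not actually prove this theorem: it simply cites Carleson \cite{Ca2} (with the remark that the formula persists when $\cD(f)=\infty$) and points to an alternative proof in \cite{RS1}. So there is no argument in the paper to compare against; you have gone well beyond what the authors do here by sketching a self-contained proof.

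Your outline is essentially the classical route: reduce both sides to the common quantity $\frac{1}{2\pi}\int_\DD\nabla P[\phi^2]\cdot\nabla u\,dA$ via Green's identity on one side and the (polarized) Douglas formula on the other, working first with the dilation $f_r$ and letting $r\to1^-$ by monotone convergence, the monotonicity coming from the pointwise nonnegativity of the Carleson integrand. The constants you quote check out. One point deserves care: the step ``apply polarized Douglas slice-by-slice to $\mathbf 1_{E_s},\mathbf 1_{E_t}$'' is delicate, because for a nontrivial arc $E$ the harmonic extension $P[\mathbf 1_E]$ has infinite Dirichlet integral, so the polarization identity is not available on individual slices. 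The clean fix is to use the layer-cake only to exhibit nonnegativity of the integrand (so Tonelli and the $r\to1^-$ limit are justified), and to apply the polarized Douglas formula \emph{once}, directly to the smooth boundary data $\phi_r^2$ and $\log\phi_r$ of the dilated function; both lie in $C^\infty(\TT)$, so Douglas holds without any subtlety and yields exactly $\frac{1}{2\pi}\int_\DD\nabla V_r\cdot\nabla u_r\,dA$. With that adjustment your argument goes through.
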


\begin{proof}
See \cite{Ca2}.
The way it is stated in \cite{Ca2}, 
the formula presupposes that $\cD(f)\lt\infty$.
However, the proof
shows that the formula holds even when $\cD(f)=\infty$.
Another proof can be found in \cite{RS1}.
\end{proof}

Recall from the introduction that, given $f\in\cD$, 
we write $[f]$ to denote the closed invariant subspace of $\cD$ generated by $f$. 
The remaining results in this section are all due to Richter and Sundberg \cite{RS1,RS2}.

\begin{thm}\label{T:RS1}
Let $f_1,f_2\in\cD$.
\begin{itemize}
\item[(i)] If $|f_1|\le|f_2|$ on $\DD$, then $[f_1]\subset[f_2]$.
\item[(ii)] If $|f_1^*|\le |f_2^*|$ a.e.\ on $\TT$ and $f_2$ is outer, 
then $[f_1]\subset[f_2]$.
\end{itemize}
\end{thm}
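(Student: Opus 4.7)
Part (ii) is a consequence of part (i): under the hypotheses of (ii), subharmonicity of $\log|f_1|$ combined with the outer representation of $f_2$ gives, for every $z\in\DD$,
$$
\log|f_1(z)|\le\int_\TT P(z,\zeta)\log|f_1^*(\zeta)|\,\frac{|d\zeta|}{2\pi}\le\int_\TT P(z,\zeta)\log|f_2^*(\zeta)|\,\frac{|d\zeta|}{2\pi}=\log|f_2(z)|,
$$
where $P$ is the Poisson kernel, whence $|f_1|\le|f_2|$ on $\DD$ and (i) applies. It therefore suffices to prove~(i).

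\textbf{Strategy.} The plan is to produce polynomials $p_n$ with $p_nf_2\to f_1$ in $\cD$, so that $f_1\in[f_2]$. The natural candidate is $p_n\approx f_1/f_2$. Set $h:=f_1/f_2$ on $\DD$; since the zeros of $f_2$ are zeros of $f_1$ of at least the same multiplicity, $h$ extends holomorphically across them and satisfies $\|h\|_\infty\le1$. For $r\in(0,1)$, let $h_r(z):=h(rz)$. Each $h_r$ is holomorphic on a neighbourhood of $\overline\DD$ with $\|h_r\|_\infty\le1$, and its Taylor polynomials $p_n^{(r)}$ converge to $h_r$ in $C^1(\overline\DD)$. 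A routine product-rule estimate,
$$
\cD\bigl((p_n^{(r)}-h_r)f_2\bigr)\le \tfrac{2}{\pi}\|(p_n^{(r)}-h_r)'\|_\infty^2\!\iint_\DD|f_2|^2\,dA+2\|p_n^{(r)}-h_r\|_\infty^2\cD(f_2),
$$
(finite since $f_2\in H^2\subset L^2(\DD,dA)$ and $\cD(f_2)<\infty$) together with an obvious $H^2$-bound, shows $p_n^{(r)}f_2\to h_rf_2$ in $\cD$. Hence $h_rf_2\in[f_2]$ for every $r<1$.

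\textbf{Passage to $r\to1^-$.} What remains is to show $h_rf_2\to f_1$ in $\cD$. The $H^2$-convergence is immediate from dominated convergence on $\TT$, using $|h_r^*|\le1$ and $h_r^*\to h^*$ a.e. The substantive task is $\cD\bigl((h_r-h)f_2\bigr)\to 0$. Expanding $((h_r-h)f_2)'=(h_r'-h')f_2+(h_r-h)f_2'$, the contribution of the second summand vanishes in the limit by dominated convergence with dominant $4|f_2'|^2\in L^1(\DD,dA)$. The first summand is the difficulty: although $\cD(f_1)<\infty$ supplies $h'f_2\in L^2(\DD,dA)$, the function $h$ lies only in $H^\infty$ and typically not in $\cD$, so $\cD(h_r)$ can blow up as $r\to 1$ and $h_r'f_2$ is not a priori uniformly $L^2$-bounded.

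\textbf{Main obstacle.} This uniform control is the technical heart of the proof. A likely cleaner route invokes Carleson's formula (Theorem~\ref{T:Carleson1}) applied to the outer part of $h_rf_2$: it expresses $\cD((h_rf_2)_o)$ as a boundary double integral whose integrand converges a.e.\ to the integrand for $f_{1,o}$ and is controlled, with some care (the integrand is not simply monotone in $|f^*|$), by the analogous integrand for $f_2$. Combined with Theorem~\ref{T:Dio} and a separate handling of the inner factors, this should deliver $\cD(h_rf_2)\to\cD(f_1)$; together with weak-$\cD$ convergence---a consequence of pointwise convergence plus any a priori $\cD$-boundedness of $\{h_rf_2\}$---this yields the desired strong convergence. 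Making this exchange of limits rigorous is the only substantive difficulty.
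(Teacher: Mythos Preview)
Your reduction of (ii) to (i) via subharmonicity of $\log|f_1|$ and the Poisson representation of $\log|f_2|$ is exactly what the paper intends by ``Part~(ii) is a simple consequence of~(i).''

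For (i), however, the paper gives no argument: it simply cites Richter--Sundberg \cite[Corollary~5.5]{RS1}. You instead attempt a direct proof, and you have correctly isolated the genuine obstacle. The approximation $h_rf_2\in[f_2]$ via Taylor polynomials of the dilate $h_r$ is fine. The problem is the passage $r\to1^-$: you need either $h_rf_2\to f_1$ strongly in $\cD$, or at least $\sup_{r}\cD(h_rf_2)<\infty$ (which, together with the $H^2$-convergence you already have, would give $f_1$ as a weak limit and hence $f_1\in[f_2]$). Neither follows from elementary manipulations: writing $(h_rf_2)'=h_r'f_2+h_rf_2'$, the second term is uniformly controlled, but $h_r'(z)f_2(z)=rh'(rz)f_2(z)$ cannot be compared to $h'(z)f_2(z)=f_1'(z)-h(z)f_2'(z)$ without relating $f_2(z)$ to $f_2(rz)$, which fails in general.

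Your proposed rescue via Carleson's formula is not a proof as it stands, and the obstacles you flag are real. The integrand in \eqref{E:Carleson1} is \emph{not} monotone in the boundary modulus, so the pointwise sandwich $|f_1^*|\le|h_r^*f_2^*|\le|f_2^*|$ does not by itself dominate the integrand for $(h_rf_2)_o$ by anything integrable. Handling the inner factor separately introduces further complications, since Theorem~\ref{T:Dio} gives $\cD((h_rf_2)_o)\le\cD(h_rf_2)$, the wrong direction for your purpose. In short, the gap you name is precisely the substantive content of the Richter--Sundberg theorem; their argument goes through the \emph{local} Dirichlet integral $\cD_\zeta$ and the associated contractive-divisor machinery developed in \cite{RS1}, rather than the global Carleson formula. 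Absent that input (or an equivalent), the proof of~(i) is incomplete.
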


\begin{proof}
Part~(i) is \cite[Corollary~5.5]{RS1}.
Part~(ii) is a simple consequence of (i).
\end{proof}

\begin{thm}\label{T:RS2a}
Let $f_1,f_2\in\cD$ be outer functions and let 
$f$ be the outer function given by $|f^*|:=\min\{|f_1^*|,|f_2^*|\}$ a.e.
Then $f\in\cD$ and 
$[f]=[f_1]\cap [f_2]$.
If further $f_1f_2\in\cD$, then $[f_1f_2]=[f_1]\cap[f_2]$.
\end{thm}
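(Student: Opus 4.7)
The plan is to prove $f\in\cD$ via an energy inequality derived from Carleson's formula, and then to identify both $[f]$ and $[f_1f_2]$ with $[f_1]\cap[f_2]$ by combining Theorem~\ref{T:RS1}(ii) with a structural theorem of Richter--Sundberg on cyclic invariant subspaces.

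For the first step, I would introduce the outer companion $h$ with $|h^*|=\max(|f_1^*|,|f_2^*|)$ a.e.\ on $\TT$ and exploit the fact that the Carleson kernel $K(a,b):=(a^2-b^2)(\log a-\log b)$ satisfies
$$
\frac{\partial^2 K}{\partial a\,\partial b}=-2\,\frac{a^2+b^2}{ab}\le 0\qquad(a,b\gt 0),
$$
hence is lattice-submodular on $(0,\infty)^2$:
$$
K(a_1\wedge a_2,\,b_1\wedge b_2)+K(a_1\vee a_2,\,b_1\vee b_2)\le K(a_1,b_1)+K(a_2,b_2).
$$
Specializing to $(a_i,b_i):=(|f_i^*(\zeta_1)|,|f_i^*(\zeta_2)|)$ and integrating against $|\zeta_1-\zeta_2|^{-2}|d\zeta_1||d\zeta_2|$, Theorem~\ref{T:Carleson1} yields $\cD(f)+\cD(h)\le\cD(f_1)+\cD(f_2)\lt\infty$, placing both $f$ and $h$ in $\cD$.

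Next, the inclusion $[f]\subset[f_1]\cap[f_2]$ drops out of two applications of Theorem~\ref{T:RS1}(ii) to the boundary bound $|f^*|\le|f_i^*|$. For the reverse inclusion, given $g\in[f_1]\cap[f_2]$ with $\cD$-approximations $p_nf_1\to g$ and $q_nf_2\to g$, the plan is to appeal to the Richter--Sundberg characterization of $[\varphi]$ for outer $\varphi\in\cD$ in order to assemble a single approximation of $g$ by polynomial multiples of $f$, using the outerness of the ratios $f_i/f$ to transfer divisibility information from the $f_i$-level down to the $f$-level. For the last assertion, the already-established $[f]=[f_1]\cap[f_2]$ reduces matters to $[f]=[f_1f_2]$; here the factorizations $f_1f_2=fh$ and $f=(f_1f_2)/h$, with both $h$ and $1/h$ outer, allow the same Richter--Sundberg machinery to place $f_1f_2\in[f]$ and $f\in[f_1f_2]$.

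The principal obstacle is the reverse inclusion $[f_1]\cap[f_2]\subset[f]$. Theorem~\ref{T:RS1}(ii) is silent here, because elements of $[f_i]$ need not be pointwise-dominated by $f_i$ on $\TT$; bridging this gap requires the deeper Richter--Sundberg structural theory of cyclic subspaces in $\cD$, which itself rests on a Carleson-formula estimate closely related to the submodularity bound exploited in the first step.
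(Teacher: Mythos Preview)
The paper does not prove this theorem at all; its entire proof is the one-line citation ``See \cite[Lemma~2.2 and Theorem~4.5]{RS2}.'' So there is nothing to compare your argument against here, and your sketch in fact goes further than the paper by outlining what Richter and Sundberg actually do.

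For the record, your outline is accurate. The lattice-submodularity computation for $K(a,b)=(a^2-b^2)(\log a-\log b)$ is correct and yields $\cD(f)+\cD(h)\le\cD(f_1)+\cD(f_2)$; this is precisely \cite[Lemma~2.2]{RS2}. You are also right that the reverse inclusion $[f_1]\cap[f_2]\subset[f]$ is the substantive step and cannot be extracted from Theorem~\ref{T:RS1} alone. The ``Richter--Sundberg machinery'' you invoke is the characterization (a consequence of \cite[Theorem~4.2]{RS2}, resting ultimately on \cite{RS1}) that for outer $\varphi\in\cD$ and $g\in\cD$ one has $g\in[\varphi]$ if and only if $g/\varphi$ lies in the Smirnov class~$\cN^+$. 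Granting that, both the reverse inclusion and the identity $[f_1f_2]=[f]$ follow exactly as you say: for the latter, $f_1f_2/f=h$ and $f/(f_1f_2)=1/h$ are both outer, hence in $\cN^+$. Your closing paragraph correctly flags that this characterization is itself the deep input, proved via Carleson-formula estimates of the same flavor as your first step.
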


\begin{proof}
See \cite[Lemma~2.2 and Theorem~4.5]{RS2}.
\end{proof}

\begin{thm}\label{T:RS2b}
Let $f$ be an outer function and let $\alpha\gt0$. 
Suppose that both $f,f^\alpha\in\cD$.
Then $[f^\alpha]=[f]$.
\end{thm}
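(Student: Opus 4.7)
The plan is to reduce to the case $|f|\le 1$ and $\alpha\ge 1$ via Theorem~\ref{T:RS2a}, show $[f^n]=[f]$ for every positive integer $n$ by induction using the same theorem, and then sandwich $|f^\alpha|$ between $|f|$ and $|f|^n$ (for any integer $n\ge\alpha$) to conclude via Theorem~\ref{T:RS1}(i).

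For the reduction, apply Theorem~\ref{T:RS2a} to the outer pair $f,\,1$: the outer function $g$ with $|g^*|=\min(|f^*|,1)$ belongs to $\cD$ and $[g]=[f]\cap\cD=[f]$. The analogous construction for $f^\alpha$ and $1$ produces the outer function with boundary modulus $\min(|f^*|^\alpha,1)=|g^*|^\alpha$, so $g^\alpha\in\cD$ and $[g^\alpha]=[f^\alpha]$. Replacing $f$ by $g$, I may therefore assume $|f^*|\le 1$ a.e.\ on $\TT$ and hence $|f|\le 1$ on $\DD$, since $f$ is outer. Interchanging the roles of $f$ and $f^\alpha$ (i.e.\ replacing $\alpha$ by $1/\alpha$) further lets me assume $\alpha\ge 1$. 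Now, for any $\beta\ge 1$, the chain-rule identity together with $|f|\le 1$ gives
\[
\cD(f^\beta)=\frac{\beta^2}{\pi}\iint_\DD|f|^{2\beta-2}|f'|^2\,dx\,dy\le\beta^2\cD(f),
\]
so $f^\beta\in\cD$; in particular every integer power $f^n$ lies in $\cD$.

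Theorem~\ref{T:RS2a} applied to the outer pair $(f^{n-1},f)$, whose product $f^n$ lies in $\cD$, yields $[f^n]=[f^{n-1}]\cap[f]$, and induction on $n$ gives $[f^n]=[f]$ for every positive integer $n$. To finish, fix an integer $n\ge\alpha$. Since $|f|\le 1$ and $n\ge\alpha\ge 1$, the pointwise inequalities $|f^n|=|f|^n\le|f|^\alpha=|f^\alpha|$ and $|f^\alpha|\le|f|$ hold on $\DD$, so Theorem~\ref{T:RS1}(i) yields
\[
[f]=[f^n]\subset[f^\alpha]\subset[f],
\]
whence $[f^\alpha]=[f]$.

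The argument is essentially a bootstrap and presents no real obstacle: beyond the two cited theorems of Richter--Sundberg, it needs only the elementary bound $\cD(f^\beta)\le\beta^2\cD(f)$ valid for bounded outer $f$ and $\beta\ge 1$. The one point requiring a little care is the initial reduction to $|f|\le 1$, which is handled cleanly by taking the $\min$ of $f$ with the constant function $1$ inside Theorem~\ref{T:RS2a}.
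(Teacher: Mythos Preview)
Your argument is correct. The paper itself does not prove this statement but simply cites \cite[Theorem~4.3]{RS2}; your proof instead derives the result from the two Richter--Sundberg facts already quoted (Theorems~\ref{T:RS1} and~\ref{T:RS2a}) together with the elementary estimate $\cD(f^\beta)\le\beta^2\cD(f)$ for $|f|\le1$ and $\beta\ge1$. The reduction to $|f|\le1$ via $\min(|f^*|,1)$, the observation that $\min(|f^*|^\alpha,1)=\bigl(\min(|f^*|,1)\bigr)^\alpha$, the inductive identity $[f^n]=[f^{n-1}]\cap[f]=[f]$, and the final sandwich $|f|^n\le|f|^\alpha\le|f|$ are all sound. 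This is a pleasant self-contained route that avoids appealing to the original Richter--Sundberg argument.
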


\begin{proof}
See \cite[Theorem~4.3]{RS2}.
\end{proof}

\section{Korenblum's method and the fusion lemma}\label{S:fusion}

The first step towards proving Theorem~\ref{T:main} is the following theorem.

\begin{thm}\label{T:Korenblum}
Let $f\in\cD$ be an outer function, and define 
$E:=\{\zeta\in\TT:\liminf_{z\to\zeta}|f(z)|=0\}$.
If $g\in\cD$ and
$|g^*(\zeta)|\le  d(\zeta,E)$ a.e.\ on $\TT$, 
then $g\in[f]$.
\end{thm}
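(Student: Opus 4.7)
The plan is to construct, for each $n\ge1$, an outer function $g_n\in[f]$ whose boundary modulus truncates $|g^*|$ from above by a multiple of $|f^*|$, and then to show $g_n\to g$ in $\cD$. As a preliminary reduction I would replace $g$ by its outer factor: Theorem~\ref{T:Dio} gives $g_o\in\cD$ with $|g_o^*|=|g^*|\le d(\cdot,E)$ a.e., and Theorem~\ref{T:RS1}(i) reduces matters to the case where $g$ is itself outer.

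For each $n\ge1$, let $g_n$ be the outer function determined by
\[
|g_n^*|\;:=\;\min\bigl(|g^*|,\,n|f^*|\bigr)\quad\text{a.e.\ on }\TT.
\]
Since $g$ and $nf$ are both outer functions in $\cD$, Theorem~\ref{T:RS2a} applies directly to give $g_n\in\cD$ and
\[
[g_n]\;=\;[g]\cap[nf]\;=\;[g]\cap[f]\;\subset\;[f],
\]
so every $g_n$ lies in $[f]$. Because $[f]$ is closed, the theorem will be proved once $g_n\to g$ in $\cD$. The pointwise monotone convergence $|g_n^*|\uparrow|g^*|$, together with the uniform bound $|g^*|\le d(\cdot,E)\le\pi$, makes $L^2$-convergence of the boundary values, and hence $H^2$-convergence of $g_n$ to $g$, automatic.

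The crux is therefore to prove $\cD(g-g_n)\to 0$. My approach would be to apply Carleson's formula (Theorem~\ref{T:Carleson1}) to $g$ and $g_n$, split the resulting double integrals on $\TT^2$ according to whether each variable lies in the good set $A_n:=\{|g^*|\le n|f^*|\}$ or in its complement $B_n$, and estimate the $B_n$-contributions by exploiting the quantitative vanishing $|g^*(\zeta)|\le d(\zeta,E)$ in order to tame the near-diagonal singularity $|\zeta_1-\zeta_2|^{-2}$. This last step is the one I expect to be the genuine obstacle: the Carleson integrand $F(a,b)=(a^2-b^2)(\log a-\log b)$ is not monotone in either argument, so even the inequality $\cD(g_n)\le\cD(g)$ is not immediate from $|g_n^*|\le|g^*|$, and the sets $B_n$ can crowd against $E$ in geometrically complicated ways. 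The role of the fusion lemma announced in the section title should be precisely to glue together the estimates on and around $B_n$, charging the error only to the boundary bound $|g^*|\le d(\cdot,E)$ and to $\cD(f)$, so that $\cD(g-g_n)\to 0$ as $n\to\infty$.
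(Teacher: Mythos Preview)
Your reduction to outer $g$ is correct, and your use of Theorem~\ref{T:RS2a} to place $g_n$ in $[f]$ is both valid and efficient; it bypasses what in the paper is the most delicate step. The gap is in the convergence, and it is a real one.

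You hope the fusion lemma will control $\cD(g-g_n)$ or at least $\cD(g_n)$. It cannot, as stated. Lemma~\ref{L:fusion} applies only when the boundary modulus is assembled on a partition $U_1,\dots,U_n$ of $\TT\setminus E$; its proof hinges on the fact that for $\zeta\in U_j$ and $\zeta'\in U_k$ with $j\ne k$ there is a point of $E$ between them, so $d(\zeta,\zeta')\ge d(\zeta,E)+d(\zeta',E)$, and this is what kills the diagonal singularity in Carleson's formula. Your sets $A_n=\{|g^*|\le n|f^*|\}$ and $B_n$ are defined pointwise by the size of $|f^*|$ and need not be separated by $E$ at all: two nearby points on the \emph{same} component of $\TT\setminus E$ can lie one in $A_n$ and one in $B_n$. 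The mechanism of the lemma is absent. Even the weaker goal $\sup_n\cD(g_n)<\infty$ (which with Lemma~\ref{L:closure} would already finish) is problematic: on $B_n$ you have $|g_n^*|=n|f^*|$, and nothing in the hypotheses limits how violently $\log|f^*|$ may oscillate near $E$, so the Carleson integrand inherits that wildness. The only bound Theorem~\ref{T:RS2a} supplies is of order $\cD(g)+n^2\cD(f)$, which blows up.

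The paper trades your easy step for your hard one. It defines $g_n$ componentwise on $\TT\setminus E=\bigcup_j I_j$: set $|g_n^*|=|g^*|$ on $I_1\cup\dots\cup I_n$ and $|g_n^*|=|g^* f^*|$ elsewhere. Now the fusion lemma applies directly (with $h_1=g/\pi$, $h_2=gf/\pi$) and gives a uniform bound on $\cD(g_n)$ independent of $n$; Lemma~\ref{L:closure} then yields $g\in[f]$ once one knows $g_n\in[f]$. The price is that this last membership is no longer free: it is obtained by a second approximation (Korenblum's device), shrinking the ``free'' region to a compact subarc of $I_1\cup\dots\cup I_n$ on which $|f|$ is bounded below, multiplying by linear factors vanishing at the endpoints, and then using that such linear factors are cyclic.
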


A slightly weaker result along these lines was already implicit in \cite{EKR}
(see Theorems~2.1 and 3.1 in that paper). 
There it was a consequence of the so-called resolvent method of Carleman, 
as exposed for example in \cite{HS,RS3}.
The proof of Theorem~\ref{T:Korenblum} below, 
based on an adaptation of a technique due to Korenblum \cite{Kor}, 
is direct and much simpler. 

We begin with a simple closure lemma.

\begin{lem}\label{L:closure}
Let $M$ be a closed subspace of $\cD$ and let $f$ be an outer function. 
Suppose that there exists a sequence $(f_n)$ in $M$ such that:
\begin{itemize}
\item[(i)] $|f_n^*|\to|f^*|$ a.e.\ on $\TT$,
\item[(ii)] $|f_n(0)|\to|f(0)|$, 
\item[(iii)] $\sup_n\cD(f_n)\lt\infty$.
\end{itemize}
Then $f\in M$.
\end{lem}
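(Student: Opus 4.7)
The plan is to extract a weakly convergent subsequence of $(f_n)$ in $\cD$, identify its limit with a unimodular multiple of $f$, and conclude by using that $M$ is weakly closed.

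First I would check that $(f_n)$ is bounded in the norm of $\cD$. Condition (iii) controls the Dirichlet semi-norm directly. Writing $f_n(z)=\sum_{k\ge 0}a_k^{(n)}z^k$, we have $|a_0^{(n)}|=|f_n(0)|$ bounded by (ii), and
\[
\sum_{k\ge 1}|a_k^{(n)}|^2 \le \sum_{k\ge 1}k|a_k^{(n)}|^2 = \cD(f_n),
\]
which is bounded by (iii). Hence $\|f_n\|_{H^2}$, and therefore $\|f_n\|_\cD$, are uniformly bounded.

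Passing to a subsequence (which I still denote by $(f_n)$), there is $g\in\cD$ with $f_n\to g$ weakly in $\cD$. Since $M$ is a closed subspace, it is weakly closed, so $g\in M$. The compact embedding $\cD\hookrightarrow H^2$ recalled in Section~\ref{S:background} upgrades weak convergence in $\cD$ to norm convergence in $H^2$, hence $f_n^*\to g^*$ in $L^2(\TT)$, and along a further subsequence pointwise a.e.\ on $\TT$. Combined with hypothesis (i), this yields $|g^*|=|f^*|$ a.e.

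Next I would factor $g=g_ig_o$ into its inner and outer parts. Then $|g_o^*|=|g^*|=|f^*|$ a.e., so by uniqueness of the outer function with prescribed boundary modulus (both $f$ and $g_o$ arise from the same exponential formula in the introduction), $g_o=f$. Since point evaluation at $0$ is a bounded linear functional on the reproducing-kernel Hilbert space $\cD$, the weak convergence gives $g(0)=\lim_n f_n(0)$, whence by (ii),
\[
|g(0)|=\lim_n|f_n(0)|=|f(0)|=f(0),
\]
using that $f(0)>0$ because $f$ is outer. As $g(0)=g_i(0)\,g_o(0)=g_i(0)f(0)$, we deduce $|g_i(0)|=1$. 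Since $g_i\in H^\infty$ with $\|g_i\|_\infty=1$, the maximum modulus principle forces $g_i$ to be a unimodular constant $c$. Hence $g=cf$, and $f=c^{-1}g\in M$.

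The step I expect to be most delicate is the last one: weak convergence in $\cD$ only gives $|g^*|=|f^*|$ after passing to subsequences, and hypothesis (ii) is essential to rule out a non-trivial inner factor in the weak limit. This is precisely where the outerness of $f$ and the convergence of the values at the origin are used together.
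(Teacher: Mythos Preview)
Your proof is correct and follows essentially the same route as the paper's: extract a weak limit $g\in M$ from norm-boundedness, use the compact embedding $\cD\hookrightarrow H^2$ together with (i)--(ii) to get $|g^*|=|f^*|$ a.e.\ and $|g(0)|=|f(0)|$, and conclude $g=cf$ with $|c|=1$. You have simply unpacked the final step (where the paper just says ``As $f$ is outer, we deduce that $f=cg$'') via the inner--outer factorization and the maximum modulus principle, which is a perfectly good way to see it.
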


\begin{proof}
By (ii) and (iii) together, $(f_n)$ is norm-bounded in the Hilbert space $\cD$,
so a subsequence $(f_{n_j})$ converges weakly in~$\cD$, to $g$ say.
As a closed subspace of $\cD$ is weakly closed, we have $g\in M$.
Also, we have $f_{n_j}^*\to g^*$ in $L^2(\TT)$ 
(because the inclusion $\cD\hookrightarrow H^2$ is compact)
and $f_{n_j}(0)\to g(0)$. 
From (i) and (ii), it follows that $|f^*|=|g^*|$ a.e.\ on $\TT$ and
$|f(0)|=|g(0)|$. 
As $f$ is outer, we deduce that $f=cg$ for some unimodular constant $c$.
Hence, finally, $f\in M$, as claimed.
\end{proof}

Next we prove a fusion lemma for $\cD$, 
which may be of independent interest.

\begin{lem}\label{L:fusion}
Let $E$ be a closed subset of $\TT$ of measure zero.
Let $h_1,\dots,h_n\in\cD$ be outer functions satisfying
$|h_j^*(\zeta)|\le \pi^{-1}d(\zeta,E)$ a.e. $(j=1,\dots,n)$.
Let $\TT\setminus E=U_1\cup\dots\cup U_n$ be a partition
of $\TT\setminus E$ into $n$ open subsets, 
and let $h$ be the outer function
such that $|h^*|=|h^*_j|$ on $U_j~(j=1,\dots,n)$.
Then $h\in\cD$ and
\begin{equation}\label{E:fusion}
\cD(h)\le
\sum_{j=1}^n\cD(h_j)+\frac{1}{2}
\sum_{j=1}^n\log\frac{1}{|h_j(0)|}.
\end{equation}
\end{lem}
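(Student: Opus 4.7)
The plan is to apply Carleson's formula (Theorem~\ref{T:Carleson1}) to $h$ and exploit the partition structure. Set $\phi := |h^*|$, so that $\phi = |h_j^*|$ a.e.\ on $U_j$. First I would check that $\log\phi \in L^1(\TT)$ (immediate from $\log|h_j^*| \in L^1$ for each outer $h_j \in H^2$), so $h$ is a legitimate outer function. Carleson's formula then gives
\[
4\pi^2 \cD(h) = \iint_{\TT^2}\frac{(\phi(\zeta_1)^2 - \phi(\zeta_2)^2)(\log\phi(\zeta_1) - \log\phi(\zeta_2))}{|\zeta_1-\zeta_2|^2}\,|d\zeta_1|\,|d\zeta_2|,
\]
and I would split $\TT^2 = \bigsqcup_{i,j} U_i \times U_j$ (up to measure zero).

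On each diagonal block $U_j \times U_j$ the integrand coincides with the Carleson integrand of $h_j$, and since that integrand is pointwise nonnegative the diagonal contributions sum to at most $4\pi^2 \sum_j \cD(h_j)$. The crux of the proof is the off-diagonal estimate. The key geometric observation is that for $\zeta_1 \in U_i$ and $\zeta_2 \in U_j$ with $i \ne j$, any arc joining $\zeta_1$ to $\zeta_2$ must meet $E$, so $d(\zeta_1,\zeta_2) \ge \max(d(\zeta_1,E),d(\zeta_2,E))$; combined with the hypothesis $|h_k^*| \le \pi^{-1}d(\cdot,E)$, this forces $\phi(\zeta_1),\phi(\zeta_2) \le |\zeta_1-\zeta_2|/2$ throughout the off-diagonal region.

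To bound the off-diagonal integrand I would use the elementary inequality
\[
(a^2 - b^2)(\log a - \log b) \le r^2\bigl(\log(r/a) + \log(r/b)\bigr) \qquad (0 < a,b \le r),
\]
which reduces, after setting $u = \log(r/a)$ and $v = \log(r/b)$, to the triviality $(e^{-2u} - e^{-2v})(v-u) \le u+v$. With $r = |\zeta_1-\zeta_2|/2$ the off-diagonal integrand is thereby bounded by $\tfrac14\bigl[\log(|\zeta_1-\zeta_2|/(2\phi(\zeta_1))) + \log(|\zeta_1-\zeta_2|/(2\phi(\zeta_2)))\bigr]$. After symmetrising in $\zeta_1 \leftrightarrow \zeta_2$ and using the classical identity $\int_\TT \log|\zeta_1-\zeta_2|\,|d\zeta_2| = 0$ to transfer the $\log|\zeta_1-\zeta_2|$ part onto the diagonal blocks (where the log-kernel is strongly negative), the principal remaining term is proportional to $\int_\TT \log(1/\phi)\,|d\zeta|$. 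Since $\phi \le \pi^{-1}d(\cdot,E) \le 1$, we have $\int_\TT \log(1/\phi) \le \sum_j\int_\TT\log(1/\phi_j) = 2\pi\sum_j\log(1/|h_j(0)|)$ by the outer-function formula, producing a bound of the desired form.

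The main obstacle will be making the constants match exactly so that the factor $\tfrac12$ in \eqref{E:fusion} emerges as stated, and verifying that the residual diagonal contributions of $\log|\zeta_1-\zeta_2|$ over $U_j \times U_j$ are indeed favourable. Since the log-kernel tends to $-\infty$ on each diagonal one expects these residuals to be nonpositive, but pinning this down cleanly---possibly via a sharpening of the elementary inequality or a more delicate partition of the integrand---is the most delicate accounting step.
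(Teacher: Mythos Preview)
Your overall strategy---apply Carleson's formula, split $\TT^2$ into blocks $U_i\times U_j$, bound diagonal blocks by $\cD(h_j)$ via positivity of the integrand---is exactly what the paper does. The divergence is in the off-diagonal estimate, and there your route is unnecessarily tortuous and contains a sign error.

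The paper never introduces a $\log|\zeta_1-\zeta_2|$ term. Instead it uses the sharper geometric fact that for $\zeta\in U_j$, $\zeta'\in U_k$ with $j\ne k$ one has $d(\zeta,\zeta')\ge d(\zeta,E)+d(\zeta',E)$ (the arc between them must cross $E$, so the distances add, not just the max). From this and $|h_j^*|\le\pi^{-1}d(\cdot,E)$ one gets directly
\[
\Bigl|\frac{|h_j^*(\zeta)|^2-|h_k^*(\zeta')|^2}{|\zeta-\zeta'|^2}\Bigr|\le\frac{1}{4},
\]
and since $|h_j^*|,|h_k^*|\le 1$ one has $|\log|h_j^*(\zeta)|-\log|h_k^*(\zeta')||\le \log\frac{1}{|h_j^*(\zeta)|}+\log\frac{1}{|h_k^*(\zeta')|}$. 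The off-diagonal integrand is thus bounded by $\tfrac14\bigl(\log\frac{1}{|h_j^*(\zeta)|}+\log\frac{1}{|h_k^*(\zeta')|}\bigr)$, and summing over $j\ne k$ and using $|U_k|\le 2\pi$ together with $\int_\TT\log\frac{1}{|h_j^*|}=2\pi\log\frac{1}{|h_j(0)|}$ gives the $\tfrac12\sum_j\log\frac{1}{|h_j(0)|}$ on the nose.

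Your elementary inequality is correct, but it leaves you with the extra term $\sum_{j\ne k}\iint_{U_j\times U_k}\log|\zeta_1-\zeta_2|$. You transfer this via $\int_\TT\log|\zeta_1-\zeta_2|\,|d\zeta_2|=0$ to $-\sum_j\iint_{U_j\times U_j}\log|\zeta_1-\zeta_2|$ and then hope it is nonpositive. It is not: writing $\nu_j=1_{U_j}|d\zeta|-\frac{|U_j|}{2\pi}|d\zeta|$, the logarithmic energy of $\nu_j$ is nonnegative, which unwinds to $\iint_{U_j\times U_j}\log\frac{1}{|\zeta_1-\zeta_2|}\ge 0$. So your residual is \emph{nonnegative}, i.e.\ unfavourable. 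It is, however, bounded by an absolute constant (since $\log|\zeta_1-\zeta_2|\le\log 2$ on the off-diagonal representation), so your argument would still yield \eqref{E:fusion} with an extra additive $O(1)$---enough for every application in the paper, but not the inequality as stated. To get the clean $\tfrac12$, drop the $(a^2-b^2)(\log a-\log b)\le r^2(\cdots)$ inequality and bound the quotient $(a^2-b^2)/|\zeta-\zeta'|^2$ directly as above.
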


\begin{proof}
By Carleson's formula \eqref{E:Carleson1},
\begin{align*}
\cD(h)&
=\frac{1}{4\pi^2}\iint_{\TT^2}
\frac{(|h^*(\zeta)|^2-|h^*(\zeta')|^2)
(\log|h^*(\zeta)|-\log|h^*(\zeta')|)}
{|\zeta-\zeta'|^2}\,|d\zeta|\,|d\zeta'|\\
&= 
\frac{1}{4\pi^2}\sum_{j,k}\int_{U_k}\int_{U_j}
\frac{(|h_j^*(\zeta)|^2-|h_k^*(\zeta')|^2)
(\log|h_j^*(\zeta)|-\log|h_k^*(\zeta')|)}
{|\zeta-\zeta'|^2}\,|d\zeta|\,|d\zeta'|.
\end{align*}

The terms with $j=k$ are estimated using Carleson's formula again.
For each $j$ we have
$$
\frac{1}{4\pi^2}\int_{U_j}\int_{U_j}
\frac{(|h_j^*(\zeta)|^2-|h_j^*(\zeta')|^2)
(\log|h_j^*(\zeta)|-\log|h_j^*(\zeta')|)}
{|\zeta-\zeta'|^2}\,|d\zeta|\,|d\zeta'|
\le \cD(h_j).
$$

Now suppose that $j\ne k$.
If $\zeta\in U_j$ and $\zeta'\in U_k$,
then there exists a point of $E$ between them, so  
$d(\zeta,\zeta')\ge d(\zeta,E)+d(\zeta',E)$, and consequently
$$
\Bigl|\frac{|h_j^*(\zeta)|^2-|h_k^*(\zeta')|^2}{|\zeta-\zeta'|^2}\Bigr|
\le \frac{\pi^{-2}d(\zeta,E)^2+\pi^{-2}d(\zeta',E)^2}{(4/\pi^2)d(\zeta,\zeta')^2}
\le \frac{1}{4}.
$$
Note also that the hypothesis $|h_j^*(\zeta)|\le \pi^{-1}d(\zeta,E)$ implies that
$\|h_j\|_\infty\le 1$. 
Hence, if $j\ne k$, then
\begin{align*}
\int_{U_k}\int_{U_j}
&\frac{(|h_j^*(\zeta)|^2-|h_k^*(\zeta')|^2)
(\log|h_j^*(\zeta)|-\log|h_k^*(\zeta')|)}
{|\zeta-\zeta'|^2}\,|d\zeta|\,|d\zeta'| \\
&\le \frac{1}{4}\int_{U_k}\int_{U_j}
\Bigl|\log|h_j^*(\zeta)|-\log|h_k^*(\zeta')|\Bigr|\,|d\zeta|\,|d\zeta'| \\
&\le \frac{1}{4}\int_{U_k}\int_{U_j}
\Bigl(\log\frac{1}{|h_j^*(\zeta)|}+\log\frac{1}{|h_k^*(\zeta')|}\Bigr)
\,|d\zeta|\,|d\zeta'| \\
&=\frac{1}{4}|U_k|\int_{U_j}\log\frac{1}{|h_j^*(\zeta)|}\,|d\zeta|
+\frac{1}{4}|U_j|\int_{U_k}\log\frac{1}{|h_k^*(\zeta')|}\,|d\zeta'|.
\end{align*}
Therefore,
\begin{align*}
\frac{1}{4\pi^2}\sum_{\substack{j,k\\j\ne k}}
&\int_{U_k}\int_{U_j}
\frac{(|h_j^*(\zeta)|^2-|h_k^*(\zeta')|^2)
(\log|h_j^*(\zeta)|-\log|h_k^*(\zeta')|)}
{|\zeta-\zeta'|^2}\,|d\zeta|\,|d\zeta'| \\
&\le\frac{1}{4\pi}\sum_j\int_\TT\log\frac{1}{|h_j^*(\zeta)|}\,|d\zeta|
=\frac{1}{2}\sum_j\log\frac{1}{|h_j(0)|},
\end{align*}
the last equality because each $h_j$ is outer. 

Finally, combining these estimates, we see that \eqref{E:fusion}
holds, and  the proof is complete.
\end{proof}

We now turn to the proof of Theorem~\ref{T:Korenblum}.
As noted earlier, it is based upon an technique due to Korenblum \cite{Kor}.
Further applications of this technique may be found in \cite{Bo,Ma}.
In the course of the proof, we shall use Lemma~\ref{L:fusion} 
several times, always with $n=2$.
What  is important is that the estimate \eqref{E:fusion} 
depends only on $E,h_1,h_2$ and not on the choice of partition $U_1,U_2$.

\begin{proof}[Proof of Theorem~\ref{T:Korenblum}]
Let $f_1$ be the outer function such that 
$|f_1^*|=\min\{|f^*|,1\}$ a.e.\ on $\TT$.
Then by Theorem~\ref{T:RS2a}, we have $f_1\in\cD$ 
and $[f_1]=[f]$. Thus there is no loss of
generality in supposing, from the  outset, that $|f|\le1$. 

Also,
if $g=g_ig_o$ is the inner-outer factorization of $g$, 
then $|g_o^*|=|g^*|$ a.e.\ and by Theorem~\ref{T:RS1} we have $g\in[g_o]$. 
Thus, without loss of generality, we may suppose that $g$ is outer.

Let $I_1,I_2,\dots$ be the connected components of $\TT\setminus E$.
For each $n\ge1$,  let $g_n$ be the outer function such
that 
$$
|g_n^*(\zeta)|=
\begin{cases}
|g^*(\zeta)|, &\zeta\in\cup_{j\le n} I_j\\
|g^*(\zeta)f^*(\zeta)|, &\zeta\in\cup_{j>n}I_j.
\end{cases}
$$
We claim that:
\begin{itemize}
\item[(i)]
$|g_n^*|\to |g^*|$ a.e.,
\item[(ii)]
$|g_n(0)|\to|g(0)|$,
\item[(iii)]
$\sup_n\cD(g_n)\lt\infty$,
\item[(iv)]
$g_n\in[f]$ for all $n$.
\end{itemize}
If so, then by Lemma~\ref{L:closure} we have $g\in[f]$, as desired.

It is obvious that (i) and (ii) hold. Also (iii) follows from Lemma~\ref{L:fusion},
applied with $h_1:=g/\pi$ and $h_2:=gf/\pi$. 
It remains to prove (iv). For this, consider
first $I_1=(e^{ia},e^{ib})$. 
Choose $a_k\downarrow a$ and $b_k\uparrow b$. For each $k$, 
let $\phi_k$ be the outer function such that
$$
|\phi_k^*(\zeta)|=
\begin{cases}
|(\zeta-e^{ia_k})(\zeta-e^{ib_k})g^*(\zeta)|, 
&\zeta\in(e^{ia_k},e^{ib_k})\\
|(\zeta-e^{ia_k})(\zeta-e^{ib_k})g^*(\zeta)f^*(\zeta)|, 
&\zeta\notin[e^{ia_k},e^{ib_k}].
\end{cases}
$$
Clearly $|\phi_k^*(\zeta)|\to |(\zeta-e^{ia})(\zeta-e^{ib})g_1^*(\zeta)|$ a.e.\ 
and $|\phi_k(0)|\to |g_1(0)|$ as $k\to\infty$.
By Lemma~\ref{L:fusion} again, $\sup_k\cD(\phi_k)\lt\infty$. 
Also, from the way that $E$ is defined,
each function $|\phi_k^*/f^*|$ is bounded on $\TT$, 
so  using Theorem~\ref{T:RS1} we have $\phi_k\in[f]$.
By Lemma~\ref{L:closure}, we deduce that $(z-e^{ia})(z-e^{ib})g_1\in[f]$.
But also, by Theorem~\ref{T:RS2a},
$$
[(z-e^{ia})(z-e^{ib}b)g_1]=[(z-e^{ia})]\cap [(z-e^{ib})]\cap[g_1]=[g_1],
$$
the last equality because $(z-e^{ia})$ and $(z-e^{ib})$ 
are both cyclic in $\cD$ (see e.g.\ \cite[Lemma~8]{BS}).
Hence $g_1\in[f]$. An obvious adaptation of this argument shows that
$g_n\in[f]$ for each $n$, giving (iv) above, and thus completing the proof.
\end{proof}

\section{Distance functions}\label{S:distance}

Let $E$ be a closed subset of $\TT$ of Lebesgue measure zero, and 
let $w:(0,\pi]\to\RR^+$ be a continuous function such that
\begin{equation}\label{E:log}
\int_\TT |\log w(d(\zeta,E))|\,|d\zeta|\lt\infty.
\end{equation}
We shall denote by $f_w$  the outer function given by
\begin{equation}\label{E:fw}
|f_w^*(\zeta)|=w(d(\zeta,E)) \quad\text{a.e.}
\end{equation}
Functions of this kind were already studied, for example,
by Carleson in \cite{Ca1}, in the course of his construction
of outer functions in $A^k(\DD)$ with prescribed zero sets.
(Here $A^k(\overline{\DD})$ is the space of
$f\in C^k(\overline{\DD})$ that are holomorphic on $\DD$.)
As the functions $f_w$ do not seem to bear a special name,
we have christened them {\em distance functions}.
Our basic result is a two-sided estimate for the Dirichlet integral of 
certain distance functions.

\begin{thm}\label{T:Dint}
Let $E$ be a closed subset of $\TT$ of measure zero, 
let $w:(0,\pi]\to\RR^+$ be an increasing function such that
\eqref{E:log} holds,
and let $f_w$ be the outer function given by \eqref{E:fw}. 
Suppose further that there exists $\gamma\gt2$ such that
$t\mapsto w(t^\gamma)$ is concave. Then
\begin{equation}\label{E:Dint1}
\cD(f_w)\asymp \int_\TT w'(d(\zeta,E))^2d(\zeta,E)\,|d\zeta|,
\end{equation}
where the implied constants depend only on $\gamma$.
In particular, $f_w\in\cD$ iff the integral in \eqref{E:Dint1} is finite.
\end{thm}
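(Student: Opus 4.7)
The plan is to apply Carleson's formula (Theorem~\ref{T:Carleson1}) to write
$$
\cD(f_w)=\frac{1}{4\pi^2}\iint_{\TT^2}\frac{(w(d_1)^2-w(d_2)^2)(\log w(d_1)-\log w(d_2))}{|\zeta_1-\zeta_2|^2}\,|d\zeta_1|\,|d\zeta_2|,
$$
where $d_j:=d(\zeta_j,E)$, and compare this positive double integral to
$$
J:=\int_\TT w'(d(\zeta,E))^2\,d(\zeta,E)\,|d\zeta|.
$$
Two consequences of the concavity of $\phi(t):=w(t^\gamma)$ will be used throughout. First, $s\mapsto w(s)/s^{1/\gamma}$ is decreasing, so $w$ satisfies the slow-growth bound $w(\lambda s)\le\lambda^{1/\gamma}w(s)$ for $\lambda\ge1$. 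Second, $s\mapsto w'(s)s^{1-1/\gamma}$ is decreasing, which yields both $sw'(s)\le w(s)/\gamma$ and the crucial local comparability $w(s)\asymp w(s')$ and $w'(s)\asymp w'(s')$ whenever $s\asymp s'$.

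For the lower bound, restrict the double integral to pairs $(\zeta_1,\zeta_2)$ lying in the same connected component of $\TT\setminus E$ with $d_1/2\le d_2\le 2d_1$. There, the local comparability lets one use the mean value theorem on both factors to obtain
$$
(w(d_1)^2-w(d_2)^2)(\log w(d_1)-\log w(d_2))\gtrsim (d_1-d_2)^2 w'(d_1)^2.
$$
Parametrizing each component by signed arclength from an endpoint (so $d(\zeta,E)$ becomes a linear coordinate), one has $|\zeta_1-\zeta_2|^2\asymp (d_1-d_2)^2+\tau^2$ for a suitable transverse coordinate $\tau$; integrating $\tau$ over the admissible range $|\tau|\lesssim d_1$ produces one extra factor of $d_1$, yielding $\cD(f_w)\gtrsim J$.

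For the upper bound, split the double integral according to whether $\zeta_1,\zeta_2$ lie in the same component of $\TT\setminus E$. On the same-component piece, apply the elementary inequality $(a^2-b^2)(\log a-\log b)\le C(a-b)^2\max(a,b)/\min(a,b)$ (the ratio being controlled by a power of $d_1/d_2$ via property~(a)), combine with the mean value theorem and the comparability $w'(d_1)\asymp w'(d_2)$ on nearby scales, and integrate to get a bound of order $J$. On the different-component piece, the presence of a point of $E$ between $\zeta_1$ and $\zeta_2$ forces $|\zeta_1-\zeta_2|\ge d_1+d_2$, so the integrand is dominated by
$$
\frac{(w(d_1)^2+w(d_2)^2)\bigl(|\log w(d_1)|+|\log w(d_2)|\bigr)}{(d_1+d_2)^2};
$$
the slow-growth estimate $w(s)^2\lesssim s^{2/\gamma}$ with $\gamma>2$ makes this kernel integrable and absorbs it into a multiple of $J$.

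The main technical obstacle is the off-diagonal (different-component) part of the upper bound: the hypothesis $\gamma>2$ is used sharply here, precisely because it makes $w(s)^2/s$ integrable at $0$ and thus prevents the cross-gap interactions from outweighing the diagonal term. Making the constants in the mean value comparisons truly depend only on $\gamma$ (via property~(b)) is what allows the final two-sided asymptotic to hold with $\gamma$-dependent constants, as stated.
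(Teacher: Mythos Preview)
Your outline starts correctly---Carleson's formula, plus the two monotonicity properties coming from concavity of $w(t^\gamma)$---but both halves of the argument have real gaps.

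\textbf{Lower bound.} The ``transverse coordinate $\tau$'' does not exist: $\TT$ is one-dimensional, and once you fix a component and parametrize by arclength, the pair $(\zeta_1,\zeta_2)$ has a single degree of freedom besides $d_1$. Your formula $|\zeta_1-\zeta_2|^2\asymp(d_1-d_2)^2+\tau^2$ and the subsequent ``integration over $|\tau|\lesssim d_1$'' are not meaningful here. What does work (and is close to the paper's argument) is to restrict to pairs with $d_2<d_1$ and $d(\zeta_1,\zeta_2)<d_1/2$; for fixed $\zeta_1$ this is an arc of length $d_1/2$, and on it $d_2\ge d_1/2$, so the integrand is bounded below by a quantity $\asymp w'(d_1)^2$ independent of $\zeta_2$. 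Integrating then gives $w'(d_1)^2\,d_1$ directly---no phantom second variable.

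\textbf{Upper bound.} Your component splitting is substantially more delicate than you indicate, and neither piece is complete. On the same-component piece, ``comparability $w'(d_1)\asymp w'(d_2)$ on nearby scales'' only applies when $d_1\asymp d_2$, but a single component can contain pairs with $d_2\ll d_1$; you have not said how those are handled. On the different-component piece, the bound $w(s)^2\lesssim s^{2/\gamma}$ makes the kernel integrable in a crude sense, but you have not shown how the resulting integral is controlled by $J=\int w'(d)^2 d\,N_E(d)\,dd$ with a constant depending only on $\gamma$---indeed the naive bound produces terms like $\int|\log w(d)|\,N_E(d)\,dd$, not $J$. The paper avoids the component decomposition entirely: it uses only $|\delta_1-\delta_2|\le d(\zeta_1,\zeta_2)$ and the monotonicity of $w$ to majorize the integrand by a function of $(t,s)=(\delta_2,d(\zeta_1,\zeta_2))$, then estimates $w^2(t+s)-w^2(t)$ and $\log w(t+s)-\log w(t)$ separately via the decreasing functions $s\mapsto w'(s)s^{1-1/\gamma}$ and $s\mapsto w(s)/s^{1/\gamma}$. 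After the substitution $x=s/t$ this collapses to $w'(t)^2 t$ times a single integral $\int_0^\infty((1+x)^{1/\gamma}-1)(1+x)^{1/\gamma}\log(1+x)\,dx/x^2$, which converges precisely when $\gamma>2$. That is where the hypothesis is used sharply, not in the cross-component interactions.
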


Before going on, 
it will be convenient to introduce a little more notation.
Given  a closed subset $E$ of $\TT$ of Lebesgue measure zero, 
we write
\begin{equation}\label{E:NE}
N_E(t):=2\sum_j 1_{\{|I_j|\gt 2t\}}
\qquad(0\lt t\le\pi),
\end{equation}
where $(I_j)$ are the components of $\TT\setminus E$,
and $|\cdot|$ denotes Lebesgue measure on $\TT$. 
It is then elementary to check that, for every  measurable function
$\Omega:(0,\pi]\to\RR^+$, 
\begin{equation}\label{E:N_E}
\int_\TT\Omega(d(\zeta,E))\,|d\zeta|
=\int_0^\pi \Omega(t)N_E(t)\,dt.
\end{equation}
For example, taking $\Omega(t):=1_{[0,\delta]}$, 
we have $\int_0^\delta N_E(t)\,dt=|E_\delta|$,
where $E_\delta:=\{\zeta\in\TT:d(\zeta,E)\le\delta\}$.
In particular $\delta N_E(\delta)\le |E_\delta|$.
Note also that, in this notation, \eqref{E:log} is equivalent to
\begin{equation}\label{E:log2} 
\int_0^\pi|\log w(t)|N_E(t)\,dt\lt\infty,
\end{equation} 
and \eqref{E:Dint1} now becomes
\begin{equation}\label{E:Dint2}
\cD(f_w)\asymp \int_0^\pi w'(t)^2t N_E(t)\,dt.
\end{equation}

\begin{proof}[Proof of Theorem~\ref{T:Dint}]
In what follows, $\zeta_1,\zeta_2$ denote points of $\TT$, 
and we write $\delta_j:=d(\zeta_j,E)$.
Note that $|\delta_1-\delta_2|\le d(\zeta_1,\zeta_2)$.
In this notation, Carleson's formula \eqref{E:Carleson1} becomes
\begin{equation}\label{E:Carleson2}
\cD(f_w)
=
\frac{1}{4\pi^2}\iint_{\TT^2}
\frac{(w^2(\delta_1)-w^2(\delta_2))(\log w(\delta_1)-\log w(\delta_2))}
{|\zeta_1-\zeta_2|^2}\,|d\zeta_1|\,|d\zeta_2|.
\end{equation}
For convenience, we shall extend $w$ to the whole of $\RR^+$ by defining
$w(t):=w(\pi)$ for $t\gt\pi$.

We first establish the upper bound in \eqref{E:Dint2}. 
Starting from \eqref{E:Carleson2}, we have
\begin{align*}
\cD(f_w)
&\le 
\frac{1}{16}\iint_{\TT^2}
\frac{(w^2(\delta_1)-w^2(\delta_2))(\log w(\delta_1)-\log w(\delta_2))}
{d(\zeta_1,\zeta_2)^2}\,|d\zeta_1|\,|d\zeta_2|\\
&\le 
\frac{1}{8}\iint_{\delta_1\ge\delta_2}
\frac{(w^2(\delta_1)-w^2(\delta_2))(\log w(\delta_1)-\log w(\delta_2))}
{d(\zeta_1,\zeta_2)^2}\,|d\zeta_1|\,|d\zeta_2|\\
&\le
\frac{1}{8}\iint_{\delta_1\ge\delta_2}
\frac{(w^2(\delta_2+d(\zeta_1,\zeta_2))-w^2(\delta_2))
(\log w(\delta_2+d(\zeta_1,\zeta_2))-\log w(\delta_2))}
{d(\zeta_1,\zeta_2)^2}\,|d\zeta_1|\,|d\zeta_2|\\
&\le
\frac{1}{4}\int_{\TT}\int_{0}^\pi
\frac{(w^2(\delta_2+s)-w^2(\delta_2))
(\log w(\delta_2+s)-\log w(\delta_2))}
{s^2}\,ds\,|d\zeta_2|\\
&=
\frac{1}{4}\int_{0}^\pi\int_{0}^\pi
\frac{(w^2(t+s)-w^2(t))
(\log w(t+s)-\log w(t))}
{s^2}\,ds\,N_E(t)\,dt.
\end{align*}
To estimate this, we now exploit the concavity assumption on $w$.
This assumption amounts to saying that $t\mapsto w'(t) t^{1-1/\gamma}$ 
is decreasing. Thus
\begin{align*}
w^2(t+s)-w^2(t)
&=\int_t^{t+s} 2w(u)w'(u)\,du\\
&\le \int_t^{t+s} 2w(t+s)w'(t)(t/u)^{1-1/\gamma}\,du\\
&=2\gamma w(t+s)w'(t)t\bigl((1+s/t)^{1/\gamma}-1\bigr).
\end{align*}
Also, using the fact that $w(t)/t^{1/\gamma}$ is decreasing, we have
\begin{align*}
\log w(t+s)-\log w(t)
&=\int_t^{t+s}\frac{w'(u)}{w(u)}\,du\\
&=\int_t^{t+s}u^{1-1/\gamma}w'(u)\frac{u^{1/\gamma}}{w(u)}\,\frac{du}{u}\\
&\le\int_t^{t+s}t^{1-1/\gamma}w'(t)\frac{(t+s)^{1/\gamma}}{w(t+s)}\,\frac{du}{u}\\
&=tw'(t)\frac{(1+s/t)^{1/\gamma}}{w(t+s)}\log(1+s/t).
\end{align*}
Combining these estimates, we obtain
\begin{align*}
\int_0^\pi&\frac{(w^2(t+s)-w^2(t))(\log w(t+s)-\log w(t))}{s^2}\,ds\\
&\le \int_0^\pi 2\gamma w'(t)^2t^2\bigl((1+s/t)^{1/\gamma}-1\bigr)(1+s/t)^{1/\gamma}\log(1+s/t)\,\frac{ds}{s^2}\\
&= w'(t)^2t\int_0^{\pi/t} 2\gamma \bigl((1+x)^{1/\gamma}-1\bigr)(1+x)^{1/\gamma}\log (1+x)\,\frac{dx}{x^2}\\
&\le A_\gamma w'(t)^2t,
\end{align*}
where $A_\gamma$ is a constant depending only on $\gamma$
(here we used the fact that $\gamma\gt2$).
Plugging this into the estimate for $\cD(f_w)$ yields 
$$
\cD(f_w)\le \frac{A_\gamma}{4}\int_0^\pi w'(t)^2tN_E(t)\,dt,
$$
giving the upper bound in \eqref{E:Dint2}.

For the lower bound, 
we start once again from Carleson's formula \eqref{E:Carleson2}.
We have
\begin{align*}
\cD(f_w)
&\ge 
\frac{1}{4\pi^2}\iint_{\TT^2}
\frac{(w^2(\delta_1)-w^2(\delta_2))(\log w(\delta_1)-\log w(\delta_2))}
{d(\zeta_1,\zeta_2)^2}\,|d\zeta_1|\,|d\zeta_2|\\
&\ge
\frac{1}{4\pi^2}\iint_{\substack{\delta_1\gt\delta_2\\d(\zeta_1,\zeta_2)\lt\delta_1/2}}
\frac{(w^2(\delta_1)-w^2(\delta_2))(\log w(\delta_1)-\log w(\delta_2))}
{d(\zeta_1,\zeta_2)^2}\,|d\zeta_1|\,|d\zeta_2|\\
&\ge
\frac{1}{4\pi^2}\iint_{\substack{\delta_1\gt\delta_2\\d(\zeta_1,\zeta_2)\lt\delta_1/2}}
\frac{(w^2(\delta_1)-w^2(\delta_1/2))(\log w(\delta_1)-\log w(\delta_1/2))}
{(\delta_1/2)^2}\,|d\zeta_1|\,|d\zeta_2|.
\end{align*}
For a fixed $\zeta_1\in\TT\setminus E$, 
the set of $\zeta_2\in\TT$ satisfying
$\delta_1\gt\delta_2$ and $d(\zeta_1,\zeta_2)\lt\delta_1/2$
is an arc of length $\delta_1/2$. Hence
\begin{align*}
\cD(f_w)
&\ge\frac{1}{4\pi^2}
\int_\TT\frac{(w^2(\delta_1)-w^2(\delta_1/2))(\log w(\delta_1)-\log w(\delta_1/2))}
{\delta_1/2}\,|d\zeta_1|\\
&=\frac{1}{4\pi^2}
\int_0^\pi\frac{(w^2(t)-w^2(t/2))(\log w(t)-\log w(t/2))}{t/2}N_E(t)\,dt.
\end{align*}
Now, using the concavity property of $w$ once again, we have
$$
w^2(t)-w^2(t/2)
\ge w(t)\int_{t/2}^t w'(u)\,du
\ge w(t)\int_{t/2}^t w'(t)(t/u)^{1-1/\gamma}\,du
=B_\gamma w(t)w'(t) t,
$$
where $B_\gamma\gt0$ is a constant depending on $\gamma$.
Also, 
$$
\log w(t)-\log w(t/2)=-\frac{1}{2}\log\frac{w^2(t/2)}{w^2(t)}
\ge \frac{w^2(t)-w^2(t/2)}{2w^2(t)}.
$$
Substituting this into the estimate for $\cD(f_w)$ yields
$$
\cD(f_w)\ge \frac{B_\gamma^2}{4\pi^2}\int_0^\pi w'(t)^2tN_E(t)\,dt,
$$
which gives the lower bound in \eqref{E:Dint2}.
\end{proof}

\begin{rems}
(i) Almost the same proof works if we assume that $w$ is decreasing instead of increasing.
This can be used to obtain a sufficient condition for both $f_w$ and $f_{1/w}$ to belong to
$\cD$, in other words, for $f_w$ to be an invertible element of $\cD$. We omit the details.

(ii) The only point in the proof of the upper bound 
where we use the fact that $\gamma\gt2$ is in 
showing that
$$
\int_0^{\pi/t} 2\gamma \bigl((1+x)^{1/\gamma}-1\bigr)(1+x)^{1/\gamma}\log (1+x)\,\frac{dx}{x^2}
\le A_\gamma,
$$
a constant independent of $t$.
If, instead, $0\lt\gamma\lt 2$, 
then this integral $\asymp t^{1-2/\gamma}\log(\pi/t)$ as $t\to0$,
and we deduce that
\begin{equation}\label{E:gammalt2}
\cD(f_w)\le A_\gamma\int_0^\pi w'(t)^2 t^{2-2/\gamma}\log(\pi/t)N_E(t)\,dt,
\end{equation}
where $A_\gamma$ is a constant depending on $\gamma$. Likewise, if $\gamma=2$, then
\begin{equation}\label{E:gammaeq2}
\cD(f_w)\le A\int_0^\pi w'(t)^2 \log^2(\pi/t)N_E(t)\,dt.
\end{equation}
\end{rems}

\begin{cor}\label{C:powers}
Let $w(t)=t^\alpha$.
\begin{itemize}
\item[(i)] 
If $0\lt\alpha\lt 1/2$, 
then $f_w\in\cD\iff \int_0^\pi t^{2\alpha-1}N_E(t)\,dt\lt\infty$.
\item[(ii)]
If $\alpha\gt 1/2$, then
$f_w\in\cD\iff
\int_0^\pi \log(\pi/t)N_E(t)\,dt\lt\infty$.
\end{itemize}
\end{cor}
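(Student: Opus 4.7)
The plan is to apply Theorem~\ref{T:Dint} and the inequalities from the remarks following it to the specific choice $w(t)=t^\alpha$. For this $w$, one has $w'(t)^2\,t=\alpha^2 t^{2\alpha-1}$, so the right-hand side of the equivalence \eqref{E:Dint2} becomes $\alpha^2\int_0^\pi t^{2\alpha-1}N_E(t)\,dt$, which is precisely the integral appearing in~(i). The whole point is to decide which admissible $\gamma$ lets us apply either Theorem~\ref{T:Dint} itself or its weaker counterpart in the remark.

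For part~(i), with $0<\alpha<1/2$, I pick any $\gamma\in(2,1/\alpha]$. Then $w(t^\gamma)=t^{\alpha\gamma}$ has exponent at most $1$ and is therefore concave, so Theorem~\ref{T:Dint} applies and the desired two-sided equivalence falls out at once. To be careful that $f_w$ is well defined as an outer function in the first place, one notes that convergence of $\int_0^\pi t^{2\alpha-1}N_E(t)\,dt$ forces \eqref{E:log2}: on $(0,\pi]$ the elementary inequality $|\log t|\le C_\alpha\,t^{2\alpha-1}$ holds (the ratio tends to $0$ as $t\to 0^+$ and stays bounded on compacta), so $\int_0^\pi|\log t|N_E(t)\,dt<\infty$ automatically.

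For part~(ii), with $\alpha>1/2$, the natural choice $\gamma=1/\alpha$ lies in $(0,2)$ and makes $w(t^\gamma)=t$ linear, hence concave; but now $\gamma<2$, so Theorem~\ref{T:Dint} no longer applies directly. Instead, I invoke inequality~\eqref{E:gammalt2} from the remark. With $w'(t)=\alpha t^{\alpha-1}$ and $2-2/\gamma=2-2\alpha$, the product $w'(t)^2 t^{2-2/\gamma}=\alpha^2 t^{2\alpha-2}\cdot t^{2-2\alpha}=\alpha^2$ collapses to a constant, so
$$
\cD(f_w)\le A_\gamma\,\alpha^2\int_0^\pi \log(\pi/t)\,N_E(t)\,dt,
$$
which (together with $\|f_w\|_\infty\le\pi^\alpha$) gives $f_w\in\cD$ as soon as the integral is finite. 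For the converse, $f_w\in\cD$ presupposes that $f_w$ exists as an outer function, i.e.\ \eqref{E:log2} with $w(t)=t^\alpha$, which via \eqref{E:N_E} reads $\int_0^\pi|\log t|N_E(t)\,dt<\infty$; this differs from $\int_0^\pi\log(\pi/t)N_E(t)\,dt$ only by the finite quantity $\log\pi\cdot\int_0^\pi N_E(t)\,dt=2\pi\log\pi$, so the two conditions are equivalent.

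The only genuine difficulty is bookkeeping: one has to keep straight how the admissible range of $\gamma$ in Theorem~\ref{T:Dint} interacts with the exponent $\alpha$, and recognize that the regime $\alpha>1/2$ forces one to use the weaker inequality~\eqref{E:gammalt2} rather than the clean two-sided estimate of Theorem~\ref{T:Dint} itself. Everything else is routine substitution.
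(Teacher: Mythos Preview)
Your proof is correct and follows exactly the same approach as the paper: part~(i) is a direct specialization of Theorem~\ref{T:Dint} with $\gamma\in(2,1/\alpha]$, and for part~(ii) the sufficiency comes from the remark-inequality~\eqref{E:gammalt2} with $\gamma=1/\alpha$, while the necessity reduces to the log-integrability condition~\eqref{E:log2}. Your added verifications (that \eqref{E:log2} is implied by the integral condition in~(i), and that $\int_0^\pi|\log t|N_E(t)\,dt$ and $\int_0^\pi\log(\pi/t)N_E(t)\,dt$ differ by a finite quantity) are helpful elaborations the paper leaves implicit.
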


\begin{proof}
Part~(i) is a special case of Theorem~\ref{T:Dint}.
The sufficiency in part~(ii) follows from \eqref{E:gammalt2}.
The necessity is a consequence of \eqref{E:log2}.
\end{proof}

The appearance of the condition in (ii) is not  surprising. It is exactly
the condition of Carleson,
\begin{equation}\label{E:Ccond}
\int_\TT\log\Bigl(\frac{\pi}{d(\zeta,E)}\Bigr)\,|d\zeta|\lt\infty,
\end{equation}
characterizing the zero sets of outer functions in $A^k(\overline{\DD})$
for $k\ge1$ (see \cite{Ca1}). For this reason, closed sets $E\subset\TT$ that satisfy 
\eqref{E:Ccond} are often called {\em Carleson sets}.

\section{Regularization and the rising-sun lemma}\label{S:sun}

The third ingredient in the proof of Theorem~\ref{T:main} is the following 
regularization theorem, which will eventually be used to smooth the function
$t\mapsto|E_t|$.

\begin{thm}\label{T:reg}
Let $a\gt0$, let $\beta\in(0,1]$ and let $\phi:(0,a]\to\RR^+$ be a function such that
\begin{itemize}
\item $\phi(t)/t$ is decreasing,
\item $0\lt\phi(t)\le t^\beta$for all $t\in(0,a]$,
\item $\int_0^a dt/\phi(t)=\infty$.
\end{itemize}
Then, given $\alpha\in(0,\beta)$, there exists a function $\psi:(0,a]\to\RR^+$ such that
\begin{itemize}
\item $\psi(t)/t^\alpha$ is increasing,
\item $\phi(t)\le \psi(t)\le t^\beta$ for all $t\in(0,a]$,
\item $\int_0^a dt/\psi(t)=\infty$.
\end{itemize}
\end{thm}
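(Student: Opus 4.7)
The plan is to take for $\psi$ the natural rising‑sun envelope
\[
\psi(t):=t^\alpha\sup_{s\in(0,t]}\phi(s)/s^\alpha,
\]
namely the pointwise smallest function dominating $\phi$ whose quotient by $t^\alpha$ is non‑decreasing. Three of the required properties are immediate: the supremum is non‑decreasing in $t$; $\psi\ge\phi$ by taking $s=t$; and $\phi(s)\le s^\beta$ together with $\alpha<\beta$ gives $\psi(t)\le t^\alpha\sup_{s\le t}s^{\beta-\alpha}=t^\beta$.

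The real content is the divergence $\int_0^a dt/\psi=\infty$. First I would apply the rising‑sun decomposition: the open set $E:=\{t\in(0,a]:\psi(t)>\phi(t)\}$ decomposes into pairwise disjoint open intervals $(u_i,v_i)$, on each of which the supremum defining $\psi(t)/t^\alpha$ is attained at $s=v_i$, and the rising‑sun lemma yields $\phi(u_i)/u_i^\alpha=\phi(v_i)/v_i^\alpha=:c_i$, so $\psi(t)=c_it^\alpha$ on $[u_i,v_i]$. On the complement $F:=(0,a]\setminus E$ one has $\psi=\phi$, hence
\[
\int_0^a\frac{dt}{\psi}=\int_F\frac{dt}{\phi}+\sum_i\frac{v_i^{1-\alpha}-u_i^{1-\alpha}}{(1-\alpha)c_i}.
\]
If $\int_F dt/\phi=\infty$ we are done; otherwise the assumption $\int_0^a dt/\phi=\infty$ forces $\sum_i\int_{u_i}^{v_i}dt/\phi=\infty$, and the task is to transfer this to the $\psi$‑series.

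This is where the hypothesis that $\phi/t$ is decreasing does the work. Within a component it gives the lower bound $\phi(t)\ge(t/v_i)\phi(v_i)$, hence
\[
\int_{u_i}^{v_i}\frac{dt}{\phi}\le\frac{v_i^{1-\alpha}}{c_i}\log(v_i/u_i);
\]
across components it gives the endpoint identity $\log\bigl(G(u_i)/G(v_i)\bigr)=(1-\alpha)\log(v_i/u_i)$ for $G:=\phi/t$, and together with the ceiling $G\le t^{\beta-1}$ and the global monotonicity of $G$ this telescopes to
\[
(1-\alpha)\sum_{i\le N}\log(v_i/u_i)\le(1-\beta)\log(1/v_N)+O(1).
\]

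The main obstacle I expect is that the pointwise ratio $K(r):=(1-\alpha)\log(1/r)/(1-r^{1-\alpha})$ blows up as $r=u_i/v_i\to0$, so a naive bound $\int_{u_i}^{v_i}dt/\phi\le K(r_i)\int_{u_i}^{v_i}dt/\psi$ does not directly suffice for transferring divergence. The telescoping bound above is the remedy: it rules out an accumulation of components with very small $r_i$. After grouping components dyadically by the size of $1/G(v_i)$, the telescoping estimate forces, within each group, the total $\sum\log(v_i/u_i)$ to be comparable up to an absolute constant with the total $\sum(1-r_i^{1-\alpha})$, and summing over all dyadic groups yields $\sum_i\int_{u_i}^{v_i}dt/\psi=\infty$, completing the proof.
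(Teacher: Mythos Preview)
Your $\psi$ coincides with the paper's: after the substitution $t=e^{-x}$ the paper sets
$u(x)=-\tfrac{1}{1-\alpha}\log\phi(e^{-x})-\tfrac{\alpha}{1-\alpha}x$,
takes its increasing regularization $\tilde u(x)=\inf_{y\ge x}u(y)$, and defines $\psi$ by the same formula; unwinding gives exactly $\psi(t)/t^\alpha=\sup_{0<s\le t}\phi(s)/s^\alpha$. Your verification of the first two properties, the rising-sun decomposition into components $(u_i,v_i)$, the endpoint identity, and the telescoping bound are all correct (modulo a routine passage to a one-sided continuous version of $\phi$, since no continuity is assumed).

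The gap is the last paragraph. The assertion that within each dyadic $1/G(v_i)$-group one has $\sum_i\log(v_i/u_i)\asymp\sum_i(1-r_i^{1-\alpha})$ with an absolute implied constant is unjustified, and in fact false as stated: a single group can contain a component with $r_i$ arbitrarily close to~$0$ (one only needs $G(v_i)/v_i^{\beta-1}\le r_i^{\beta-\alpha}$, which is compatible with all the hypotheses), so that $\log(1/r_i)$ is enormous while $1-r_i^{1-\alpha}\le1$. Your telescoping bound constrains how often this happens \emph{globally}, not per group, and converting that global control into divergence of $\sum_i\int_{u_i}^{v_i}dt/\psi$ still requires a real argument that the sketch does not supply. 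The paper sidesteps this comparison entirely. In the $x$-variable, your telescoping bound is precisely the statement that the shadow $\{\tilde u<u\}$ occupies at most a fraction $\tfrac{1-\beta}{1-\alpha}$ of $[0,X]$, so the sun set $S=\{\tilde u=u\}$ (your $F$, read in logarithmic coordinates) has lower density at least $\tfrac{\beta-\alpha}{1-\alpha}>0$. Since $e^{(1-\alpha)(u(x)-x)}=t/\phi(t)$ is positive, decreasing in $x$, and has divergent integral, an elementary lemma (any positive decreasing function with divergent integral also diverges over every set of positive lower density) yields $\int_S e^{(1-\alpha)(u-x)}\,dx=\infty$, i.e.\ $\int_F dt/\phi=\infty$. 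Hence your dichotomy is illusory: the first alternative always holds, and the shadow components never need to be compared at all.
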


The key tool in the proof of this theorem is the notion of increasing regularization.
Given a function $u:\RR^+\to\RR^+$, we define its {\em increasing regularization} 
$\tilde{u}:\RR^+\to\RR^+$ by
$$
\tilde{u}(x):=\inf\{u(y):y\ge x\} \qquad(x\in\RR^+).
$$
Clearly $\tilde{u}$ is increasing and $\tilde{u}\le u$.
Also, $\tilde{u}$ is maximal with these two properties, 
in the sense that if $v$ is any increasing function with $v\le u$ then
also $v\le \tilde{u}$.

The following result is a version of the so-called rising-sun lemma of F.~Riesz.
We prove it here in the form appropriate to our needs. 

\begin{lem}\label{L:sun}
Let $u:\RR^+\to\RR^+$ be a function that is lower semicontinuous and
right-continuous. 
Let $\tilde{u}$ be the increasing regularization of $u$ 
and set  $U:=\{x\in\RR^+:\tilde{u}(x)\lt u(x)\}$. 
Then $U$ is open in $\RR^+$. 
Further, if $a,b$ are the endpoints of any component of $U$,
then $u(a)\ge u(b)$.
\end{lem}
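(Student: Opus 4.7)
\emph{Openness.} My plan is to fix $x_0\in U$ and produce a neighborhood of $x_0$ contained in $U$. Because $\tilde{u}(x_0)<u(x_0)$, I pick $y_0>x_0$ with $u(y_0)<u(x_0)$ (strict inequality forces $y_0\ne x_0$). Taking $\epsilon:=\tfrac12(u(x_0)-u(y_0))>0$, lower semicontinuity of $u$ at $x_0$ supplies $\delta\in(0,y_0-x_0)$ with $u(x)>u(y_0)+\epsilon$ on $(x_0-\delta,x_0+\delta)$. For every $x$ in that neighborhood (intersected with $\RR^+$) we have $y_0\ge x$ and $u(y_0)<u(x)$, hence $\tilde{u}(x)\le u(y_0)<u(x)$, i.e.\ $x\in U$.

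For the endpoint inequality, let $(a,b)$ denote the component in question (allowing $a=0$). Since $b\notin U$ we have $\tilde{u}(b)=u(b)$, and in particular $u(y)\ge u(b)$ for all $y\ge b$. The core of the proof is the claim that $\tilde{u}(x)=u(b)$ for \emph{every} $x\in(a,b)$. Monotonicity already gives $\tilde{u}(x)\le u(b)$; to rule out strict inequality I would fix $x\in(a,b)$, take a minimizing sequence $y_n\ge x$ with $u(y_n)\to\tilde{u}(x)$, and pass to a subsequence converging to some $y^*\in[x,\infty]$. If $y^*\ge b$ (including $y^*=\infty$), then either $u(y_{n_k})\ge u(b)$ for large $k$, or lower semicontinuity at $b$ gives $u(b)\le\tilde{u}(x)$; either way we contradict $\tilde{u}(x)<u(b)$. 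If $y^*\in(x,b)$, then $y^*\in U$, so $\tilde{u}(y^*)<u(y^*)$; combining lower semicontinuity ($u(y^*)\le\tilde{u}(x)$) with monotonicity of $\tilde{u}$ ($\tilde{u}(y^*)\ge\tilde{u}(x)$) yields the contradiction $\tilde{u}(x)\le\tilde{u}(y^*)<u(y^*)\le\tilde{u}(x)$. Finally, if $y^*=x$, then $y_{n_k}\downarrow x$ and right-continuity of $u$ forces $u(y_{n_k})\to u(x)$, whence $\tilde{u}(x)=u(x)$, contradicting $x\in U$.

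To finish, I will transfer the information from the interior of the component to its left endpoint using right-continuity of $\tilde{u}$. The latter follows from right-continuity of $u$: $\lim_{x\to a^+}\tilde{u}(x)=\inf_{y>a}u(y)$, and right-continuity supplies $y_n\downarrow a$ with $u(y_n)\to u(a)$, so $\inf_{y>a}u(y)\le u(a)$ and therefore $\tilde{u}(a)=\inf_{y\ge a}u(y)=\inf_{y>a}u(y)$ agrees with the right-limit. Combined with the core claim, $\tilde{u}(a)=\lim_{x\to a^+}\tilde{u}(x)=u(b)$, and then $u(a)\ge\tilde{u}(a)=u(b)$. The main obstacle will be the three-way case analysis in the core claim: both hypotheses have to work in concert, with lower semicontinuity handling accumulation in $[x,b]$ and right-continuity ruling out accumulation at the basepoint~$x$.
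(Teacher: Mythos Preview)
Your argument is correct and follows the same overall strategy as the paper: openness via lower semicontinuity, then on each component $(a,b)$ show that values dominate $u(b)$, and pass to the left endpoint by right-continuity. The paper's execution of the middle step is shorter: rather than a minimizing sequence with a three-way case split on the limit point, it observes that a lower-semicontinuous function attains its minimum on the compact interval $[x,b]$, and that any minimizer $x_0$ satisfies $\tilde u(x_0)=u(x_0)$, forcing $x_0=b$; hence $u(x)\ge u(b)$ for all $x\in(a,b)$, and right-continuity of $u$ itself (no detour through right-continuity of $\tilde u$) gives $u(a)\ge u(b)$. Your route does yield the slightly sharper intermediate fact $\tilde u\equiv u(b)$ on the component, but the extreme-value shortcut is worth noting.
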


\begin{proof}
Let $x\in U$. Then there exists $y\gt x$ such that $u(y)\lt u(x)$. 
By lower semicontinuity
$u(y)\lt u(x')$ for all $x'$ in a neighborhood of $x$. 
All such $x'$ also belong to $U$. Thus $U$ is open in $\RR^+$.

Now let $a,b$ be the endpoints of a component of $U$. Since $U$ is open in $\RR^+$, 
we have $b\notin U$, and hence $u(y)\ge u(b)$ for all $y\ge b$.
Let $x\in(a,b)$. As $u$ is lower semicontinuous on the compact set $[x,b]$,
its minimum on this set is attained, at $x_0$ say. We then have $u(y)\ge u(x_0)$ for all
$y\ge x_0$, which implies that $\tilde{u}(x_0)=u(x_0)$ and so $x_0\notin U$. 
The only possibility is that $x_0=b$. 
Thus $u\ge u(b)$ on $[x,b]$, and in particular $u(x)\ge u(b)$.
Finally, letting $x\to a$ and using the right-continuity of $u$, we obtain $u(a)\ge u(b)$.   
\end{proof}

In the rising-sun terminology, the set $U$ corresponds to the shade. 
We shall need an estimate the proportion of $\RR^+$ that stays in the sun. 
Recall that the {\em lower density} of a Borel set $B\subset\RR^+$ is defined by
$$
\rho_-(B):=\liminf_{x\to\infty}\frac{\bigl|B\cap[0,x]\bigr|}{x}.
$$

\begin{lem}\label{L:density}
Let $u:\RR^+\to\RR^+$ be a positive function and
set $S:=\{x\in\RR^+:\tilde{u}(x)=u(x)\}$. 
Suppose that $x\mapsto u(x)-x$ is decreasing. Then $S$ is a Borel set and
$$
\rho_-(S)\ge \liminf_{x\to\infty}\frac{u(x)}{x}.
$$
\end{lem}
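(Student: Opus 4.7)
The strategy is to establish the pointwise bound $|S \cap [0, X]| \ge \tilde{u}(X) - \tilde{u}(0)$ for every $X > 0$; dividing by $X$ and taking $\liminf$ as $X \to \infty$ then yields the density estimate. Borel measurability of $S$ is immediate, since $\tilde{u}$ is nondecreasing (hence Borel) and $u = (u - \mathrm{id}) + \mathrm{id}$ is Borel (the summand $u - \mathrm{id}$ is monotone, hence Borel). My first concrete step is to show that $\tilde{u}$ is $1$-Lipschitz. For $x_1 < x_2$ and any $y \ge x_1$, the hypothesis that $u - \mathrm{id}$ is decreasing gives $u(y + x_2 - x_1) \le u(y) + (x_2 - x_1)$; since $y + x_2 - x_1 \ge x_2$, this yields $\tilde{u}(x_2) \le u(y) + (x_2 - x_1)$, and taking the infimum over $y \ge x_1$ produces $\tilde{u}(x_2) - \tilde{u}(x_1) \le x_2 - x_1$. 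Combined with monotonicity, $\tilde{u}$ is therefore absolutely continuous with $0 \le \tilde{u}' \le 1$ almost everywhere, and $\tilde{u}(X) - \tilde{u}(0) = \int_0^X \tilde{u}'(x)\,dx$.

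The heart of the argument is to show that $\tilde{u}' = 0$ almost everywhere on $U := \RR^+ \setminus S$. Suppose $x \in U$ is a point of differentiability of $\tilde{u}$ with $\tilde{u}'(x) > 0$; I claim that the decreasing function $v := u - \mathrm{id}$ is discontinuous at $x$ from the right. For small $h > 0$, the strict inequality $\tilde{u}(x+h) > \tilde{u}(x)$, together with the defining infimum, forces the existence of some $y \in [x, x+h)$ with $u(y) < \tilde{u}(x+h)$. Since $u(x) > \tilde{u}(x)$, choosing $h < u(x) - \tilde{u}(x)$ and using the $1$-Lipschitz bound $\tilde{u}(x+h) \le \tilde{u}(x) + h$ gives $\tilde{u}(x+h) < u(x)$, which rules out $y = x$ and so forces $y \in (x, x+h)$. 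Letting $h \to 0^+$ produces a sequence $y_n \downarrow x$ with $u(y_n) \to \tilde{u}(x)$, whence $v(x^+) \le \tilde{u}(x) - x < u(x) - x = v(x)$. Thus $v$ has a right-jump at $x$, and the set of such jumps is countable, being the discontinuity set of a monotone function. Hence $\{x \in U : \tilde{u}'(x) > 0\}$ has Lebesgue measure zero.

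Combining the two observations,
\[
\tilde{u}(X) - \tilde{u}(0) = \int_0^X \tilde{u}'(x)\,dx = \int_{S \cap [0,X]} \tilde{u}'(x)\,dx \le |S \cap [0,X]|,
\]
and dividing by $X$ gives $|S \cap [0,X]|/X \ge \tilde{u}(X)/X - \tilde{u}(0)/X$. It then remains to identify $\liminf_{X \to \infty} \tilde{u}(X)/X$ with $c := \liminf_{X \to \infty} u(X)/X$: the bound $\le$ is immediate from $\tilde{u} \le u$, while the reverse (nontrivial only when $c > 0$) follows because the hypothesis implies $u(y) \ge (c - \varepsilon) y$ for all large $y$, and so $\tilde{u}(X) = \inf_{y \ge X} u(y) \ge (c - \varepsilon) X$ for large $X$. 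The main obstacle is the second paragraph: one might naively hope that $\tilde{u}$ is locally constant on $U$, but this can genuinely fail at, for example, the right endpoint of a maximal shade interval. The saving observation is that any such failure manifests as a right-jump of the decreasing function $v$, which restricts the bad set to a countable---hence null---set.
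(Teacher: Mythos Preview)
Your proof is correct and takes a genuinely different route from the paper's. The paper first reduces to the case where $u$ is lower semicontinuous and right-continuous, then invokes the rising-sun lemma (Lemma~\ref{L:sun}) to decompose the shade $U=\RR^+\setminus S$ into intervals $(a_j,b_j)$ with $u(a_j)\ge u(b_j)$; telescoping with the hypothesis that $u(x)-x$ is decreasing gives $|U\cap[0,y]|\le y-u(y)+u(0)$ for $y\in S$, hence $|S\cap[0,y]|\ge u(y)-u(0)$, and a short extra argument passes to arbitrary $x$. Your approach bypasses Lemma~\ref{L:sun} entirely: you observe that $\tilde u$ is $1$-Lipschitz, so the fundamental theorem of calculus yields $\tilde u(X)-\tilde u(0)=\int_0^X\tilde u'$, and you show that $\tilde u'=0$ a.e.\ on $U$ by proving that any point of $U$ where $\tilde u'>0$ is a right-jump of the monotone function $v=u-\mathrm{id}$, hence lies in a countable set. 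This gives $|S\cap[0,X]|\ge\tilde u(X)-\tilde u(0)$ for \emph{every} $X$, after which you identify $\liminf\tilde u(X)/X$ with $\liminf u(X)/X$. Your argument is arguably more self-contained (no regularity reduction, no separate rising-sun lemma) and yields the slightly cleaner intermediate inequality valid for all $X$; the paper's version has the advantage of expressing the bound directly in terms of $u$ rather than $\tilde u$ on the set $S$, which makes the final density comparison a one-liner.
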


\begin{proof}
As $u(x)-x$ is decreasing, 
it follows that $u_1(x):=\lim_{y\downarrow x}u(y)$ exists for all~$x$.
The function $u_1$ is both lower semicontinuous and right-continuous,
and $u_1(x)-x$ is decreasing. 
Further, we have both $u_1=u$ and $\tilde{u}_1=\tilde{u}$ 
except on  countable
sets. Thus, we may as well suppose from the outset that 
$u$ is lower semicontinuous and right-continuous, 
so that Lemma~\ref{L:sun} applies.

We may also suppose that $u(x)\to\infty$ as $x\to\infty$, 
for if not, then $\liminf_{x\to\infty}u(x)/x=0$, and there is nothing to prove.
As a consequence of this supposition, $S$ is necessarily unbounded.

Let $y\in S$. 
Let $I_1,\dots, I_n$ be a finite set of components of
$U:=\RR^+\setminus S$ lying in $[0,y]$. 
We may suppose that $I_j$ has endpoints $a_j,b_j$,
where $0\le a_1\lt b_1\lt \dots\lt a_n\lt b_n\le y$. Then
$$
|I_1\cup\dots\cup I_n|=\sum_{j=1}^n(b_j-a_j)
\le \sum_{j=1}^n\bigl(b_j-u(b_j)-a_j+u(a_j)\bigr)\le y-u(y)+u(0),
$$
where, for the first inequality we used Lemma~\ref{L:sun}, and for the second
the fact that $u(x)-x$ is decreasing. As this holds for any such set of components, 
it follows that
$\bigl|U\cap[0,y]\bigr|\le y-u(y)+u(0)$. 
Recalling that $U$ is the complement of $S$, we deduce that
$$
\bigl|S\cap[0,y]\bigr|\ge u(y)-u(0) \quad(y\in S).
$$

Now, given $x\in\RR^+$, let $y$ be the smallest element of $S$ such that $y\ge x$.
Then
$$
\frac{\bigl|S\cap[0,x]\bigr|}{x}
=\frac{\bigl|S\cap[0,y]\bigr|}{x}
\ge\frac{\bigl|S\cap[0,y]\bigr|}{y}
\ge\frac{u(y)-u(0)}{y}.
$$
It follows that
$\liminf_{x\to\infty}|S\cap[0,x]|/x\ge\liminf_{y\to\infty}u(y)/y$,
thereby completing the proof.
\end{proof}

The last lemma we need is a simple fact about sets of positive lower density.

\begin{lem}\label{L:int=infty}
Let $v:\RR^+\to\RR^+$ be a positive decreasing function 
such that $\int_0^\infty v(x)\,dx=\infty$.
Then $\int_B v(x)\,dx=\infty$ for every Borel set $B\subset\RR^+$ with $\rho_-(B)\gt0$.
\end{lem}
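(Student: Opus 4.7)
The plan is to combine the layer-cake formula with the lower-density hypothesis on $B$, exploiting the fact that a decreasing function on $\RR^+$ has superlevel sets which are initial intervals. The crucial structural observation is that, since $v$ takes values in $\RR^+$ and is decreasing, $v(0)$ is finite and $v$ is bounded; consequently, the divergence of $\int_0^\infty v\,dx$ must come from the tail at infinity, not from a singularity at the origin, and this is what allows the density hypothesis (which only controls $B$ at infinity) to see the full mass of $v$.

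First I fix $\delta$ with $0 < \delta < \rho_-(B)$ and, by the definition of lower density, choose $X_0$ so that $|B \cap [0,x]| \geq \delta x$ for all $x \geq X_0$. Next I apply the layer-cake identity
$$
\int_B v(x)\,dx = \int_0^{v(0)} |B \cap \{v > t\}|\,dt,
$$
and note that, because $v$ is decreasing, each superlevel set is an initial interval $\{v > t\} = [0, x(t))$, where $x(t) := |\{v > t\}|$ is itself decreasing in $t$. Whenever $t < v(X_0)$ we have $[0, X_0] \subset \{v > t\}$, and hence $x(t) \geq X_0$; the density estimate then gives $|B \cap [0, x(t))| \geq \delta x(t)$, yielding the lower bound
$$
\int_B v \geq \delta \int_0^{v(X_0)} x(t)\,dt.
$$

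To conclude, I verify that this last integral is infinite. The layer-cake identity applied to $v$ itself reads $\int_0^{v(0)} x(t)\,dt = \int_0^\infty v(x)\,dx = \infty$. On the complementary range $[v(X_0), v(0)]$ the function $x(t)$ is bounded by $X_0$, because $\{v > t\} \subset [0, X_0)$ once $t \geq v(X_0)$; thus $\int_{v(X_0)}^{v(0)} x(t)\,dt \leq X_0 v(0) < \infty$. Subtracting forces $\int_0^{v(X_0)} x(t)\,dt = \infty$, completing the proof. I do not anticipate any real obstacle; the only delicate point is the opening remark that the boundedness of $v$ concentrates its mass at infinity, where the density information about $B$ actually bites — if one dropped this (e.g.\ allowed $v$ unbounded near $0$) the statement would fail, which is why the argument must funnel the density estimate through the integral over $t \in [0, v(X_0)]$ rather than, say, over $[v(X_0), v(0)]$.
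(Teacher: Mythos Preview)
Your argument is correct. The only cosmetic imprecision is writing $\{v>t\}=[0,x(t))$; the superlevel set could be $[0,x(t)]$, but since only its measure enters, nothing is lost.

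Your route is genuinely different from the paper's. The paper argues by a geometric block decomposition: fixing $a>1/\lambda$ (where $\lambda<\rho_-(B)$), it compares $\int_{B\cap[a^{k-1},a^k]}v$ with $\int_{[a^k,a^{k+1}]}v$ using only monotonicity of $v$ and the density bound, obtaining a fixed constant factor between them for large $k$, and then sums. Your layer-cake approach replaces this hands-on dyadic comparison by a single distributional rewriting, pushing the density estimate directly onto the measures of the superlevel sets. Both proofs implicitly use that $v(0)<\infty$ (the codomain being $\RR^+$) to localize the divergence at the tail; you make this explicit, while in the paper it is hidden in the final summation step. The paper's method is more elementary in that it avoids the layer-cake identity, while yours is shorter and more conceptual, and would generalize immediately to any measure whose structure function $t\mapsto|\{v>t\}|$ one can control.
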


\begin{proof}
Suppose that $\rho_-(B)\gt0$.
Then there exists $\lambda\gt0$ such that, for all sufficiently large $x$,
$$
\bigl| B\cap [0,x]\bigr|\ge \lambda x .
$$
Fix $a\gt 1/\lambda$. Then, for all sufficiently large $k$,
$$ 
\int_{B\cap[a^{k-1},a^k]}v(x)\,dx
\ge v(a^k)\bigl|B\cap[a^{k-1},a^k]\bigr|
\ge v(a^k)\Bigl(\bigl|B\cap[0,a^k]\bigr|-a^{k-1}\Bigr)
\ge v(a^k)\Bigl(\lambda a^k-a^{k-1}\Bigr).
$$
Also, for all $k$,
$$
\int_{[a^k,a^{k+1}]}v(x)\,dx\le v(a^k)(a^{k+1}-a^{k}).
$$
Hence, for all sufficiently large $k$,
$$ 
\int_{B\cap[a^{k-1},a^k]}v(x)\,dx
\ge \frac{\lambda-1/a}{a-1}\int_{[a^k,a^{k+1}]}v(x)\,dx.
$$
Summing over these $k$, we deduce that $\int_B v(x)\,dx=\infty$.
\end{proof}

\begin{proof}[Proof of Theorem~\ref{T:reg}]
By a simple change of scale, we can reduce to the case where $a=1$. 
This will simplify the notation in what follows.

Define $u:\RR^+\to\RR^+$ by the formula
$$
u(x):=-\frac{1}{1-\alpha}\log\phi(e^{-x})-\frac{\alpha}{1-\alpha}x 
\qquad(x\in\RR^+).
$$
The properties of $\phi$ are reflected in $u$ as follows:
\begin{align*}
\phi(t)/t \text{~decreasing~} 
&\quad\iff\quad 
u(x)-x \text{~decreasing},\\
\phi(t)\le t^\beta 
&\quad\iff \quad
u(x)\ge \frac{\beta-\alpha}{1-\alpha}x,\\
\int_0^1\frac{dt}{\phi(t)}=\infty
&\quad\iff\quad
 \int_0^\infty e^{(1-\alpha)(u(x)-x)}\,dx=\infty.
\end{align*}
Now let $\tilde{u}:\RR^+\to\RR^+$ be the increasing regularization of $u$,
and define $\psi:(0,1]\to\RR^+$ via the formula
$$
\tilde{u}(x):=-\frac{1}{1-\alpha}\log\psi(e^{-x})-\frac{\alpha}{1-\alpha}x 
\qquad(x\in\RR^+).
$$
The desired properties of $\psi$ correspond to properties of $\tilde{u}$ as follows:
\begin{align*}
\psi(t)/t^\alpha \text{~increasing~} 
&\quad\iff\quad 
\tilde{u}(x) \text{~increasing},\\
\phi(t)\le \psi(t)\le t^\beta 
&\quad\iff \quad
u(x)\ge \tilde{u}(x)\ge\frac{\beta-\alpha}{1-\alpha}x,\\
\int_0^1\frac{dt}{\psi(t)}=\infty
&\quad\iff\quad
 \int_0^\infty e^{(1-\alpha)(\tilde{u}(x)-x)}\,dx=\infty.
\end{align*}
It thus suffices to prove these three properties of $\tilde{u}$.
The first two are obvious. For the third, we remark that,
writing $S:=\{x\in\RR^+:\tilde{u}(x)=u(x)\}$, 
$$
\int_0^\infty e^{(1-\alpha)(\tilde{u}(x)-x)}\,dx
\ge \int_S e^{(1-\alpha)(\tilde{u}(x)-x)}\,dx
= \int_S e^{(1-\alpha)(u(x)-x)}\,dx.
$$
Also $e^{(1-\alpha)(\tilde{u}(x)-x)}$ is a decreasing function
and, by Lemma~\ref{L:density}, $\rho_-(S)\ge (\beta-\alpha)/(\alpha-1)\gt0$.
Therefore by Lemma~\ref{L:int=infty} the last integral diverges,
and the proof of the  theorem is complete.
\end{proof}

\section{Completion of the proof of Theorem~\ref{T:main}}\label{S:proof}

Let $f$ be the function in the statement of the theorem. 
Our aim is to prove that $1\in[f]$. 

Let $g$ be the outer function such that
$$
|g^*(\zeta)|=d(\zeta,E) \quad\text{a.e.}
$$
In the notation of \S\ref{S:distance}, we have $g=f_w$, where $w(t)=t$. 
Thus, by Corollary~\ref{C:powers}, $g\in\cD$ provided that
$\int_0^\pi\log(\pi/t)N_E(t)\,dt\lt\infty$.
Now $tN_E(t)\le|E_t|$, and by assumption
$|E_t|=O(t^{\mu})$ for some $\mu\gt0$,
so indeed $g\in\cD$. Theorem~\ref{T:Korenblum} therefore applies,
and we deduce that $g\in[f]$.

Next, we fix $\alpha$ with $1/2\lt\alpha\lt(1+\mu)/2$, 
and consider $g^{1-\alpha}$.
In the notation of \S\ref{S:distance}, 
we have $g^{1-\alpha}=f_w$, where now $w(t)=t^{1-\alpha}$.
By Corollary~\ref{C:powers}, $g^{1-\alpha}\in\cD$ 
provided  $\int_0^\pi t^{1-2\alpha}N_E(t)\,dt\lt\infty$, 
and by our choice of $\alpha$ this latter integral is indeed finite. 
From Theorem~\ref{T:RS2b} we have $[g^{1-\alpha}]=[g]$,
and consequently $g^{1-\alpha}\in[f]$.

The rest of the proof consists of showing that $1\in[g^{1-\alpha}]$. 
We shall achieve this by constructing a family of functions 
$w_\delta:(0,\pi]\to\RR^+$ for $0\lt\delta\lt1$,
such that the corresponding distance functions $f_{w_\delta}$
belong to $[g^{1-\alpha}]$ and satisfy:
\begin{itemize}
\item[(i)] $|f_{w_\delta}^*|\to 1$ a.e.\ as $\delta\to0$,
\item[(ii)] $|f_{w_\delta}(0)|\to 1$ as $\delta\to0$,
\item[(iii)] $\liminf_{\delta\to0}\cD(f_{w_\delta})\lt\infty$.
\end{itemize}
If such a family exists, then by Lemma~\ref{L:closure} 
we have $1\in[g^{1-\alpha}]$, as desired.

Here is the construction.
Fix $\beta$ with $\alpha\lt\beta\lt(1+\mu)/2$,
and define a function $\phi:(0,\pi]\to\RR^+$ by
$\phi(t):=\min\{|E_t|,t^\beta\}$.
This function satisfies the hypotheses of Theorem~\ref{T:reg}, 
so there exists a function $\psi:(0,\pi]\to\RR^+$ satisfying the
conclusions of that theorem, namely:
$\psi(t)/t^\alpha$ is increasing,
$\phi(t)\le\psi(t)\le t^\beta$ for all $t\in(0,\pi]$,
and $\int_0^\pi dt/\psi(t)=\infty$.
Note that, for $0\lt t\lt 1$, we have $\psi(t)\ge\phi(t)\ge t$.
For $0\lt\delta\lt1$, we define $w_\delta:(0,\pi]\to\RR^+$ by
$$
w_\delta(t):=
\begin{cases}
\frac{\delta^\alpha}{\psi(\delta)}t^{1-\alpha}, &0\lt t\le\delta,\\
A_\delta-\log\int_t^\pi ds/\psi(s), &\delta\lt t\le\eta_\delta,\\
1, &\eta_\delta\lt t\le\pi.
\end{cases}
$$
Here $A_\delta, \eta_\delta$ are constants, chosen 
to make $w_\delta$ a continuous  function with
$0\le w_\delta\le1$.

For each $\delta$, the function $w_\delta(t)/t^{1-\alpha}$ is bounded, 
from which it follows that $f_{w_\delta}/g^{1-\alpha}$ is bounded on $\DD$. 
By Theorem~\ref{T:RS1} we deduce that $f_{w_\delta}\in[g^{1-\alpha}]$.
 
The conditions (i) and (ii) above are both easy consequences of the assertion that 
$\lim_{\delta\to0}\eta_\delta=0$, which we now prove.
Given $\epsilon\gt0$, if $\eta_\delta\gt\epsilon$, 
then $w_\delta(\epsilon)\lt1$, in other words
$$
\log\int_\delta^\pi\frac{ds}{\psi(s)}-\frac{\delta}{\psi(\delta)}-
\log\int_\epsilon^\pi\frac{ds}{\psi(s)}\lt 1.
$$
As $\delta\to0$, the left-hand side tends to infinity.
Thus $\eta_\delta\le\epsilon$ for all sufficiently small $\delta$.

We now turn to the condition (iii). 
We claim that there exists $\gamma\gt2$ such that,
for all sufficiently small $\delta\gt0$, 
the function $t\mapsto w_\delta(t^\gamma)$ is concave on $(0,\pi]$.
Assume this for the moment. Then Theorem~\ref{T:Dint} applies, 
and for all small $\delta$ we have
$$
\cD(f_{w_\delta})\asymp\int_0^\pi w_\delta'(t)^2tN_E(t)\,dt
\le \int_0^{\eta_\delta} w_{\delta}'(t)^2|E_t|\,dt.
$$
We examine this last integral separately on $(0,\delta)$ and $(\delta,\eta_\delta)$.

Let us begin with $(\delta,\eta_\delta)$. Here we have
$$
\int_\delta^{\eta_\delta}w_\delta'(t)^2|E_t|\,dt
=\int_\delta^{\eta_\delta}\Bigl(\int_t^\pi\frac{ds}{\psi(s)}\Bigr)^{-2}
\frac{|E_t|}{\psi(t)^2}\,dt.
$$
Note that if $|E_t|\le t^\beta$ then $\psi(t)\ge|E_t|$,
whereas if $|E_t|\gt t^\beta$ then $\psi(t)=t^\beta$. 
The last integral is therefore majorized by
\begin{align*}
&\int_\delta^{\eta_\delta}\Bigl(\int_t^\pi\frac{ds}{\psi(s)}\Bigr)^{-2}
\frac{1}{\psi(t)}\,dt
+\int_\delta^{\eta_\delta}\Bigl(\int_t^\pi\frac{ds}{\psi(s)}\Bigr)^{-2}
\frac{Ct^\mu}{t^{2\beta}}\,dt
\le \Bigl(\int_{\eta_\delta}^\pi\frac{ds}{\psi(s)}\Bigr)^{-1}
+C\Bigl(\int_{\eta_\delta}^\pi\frac{ds}{\psi(s)}\Bigr)^{-2} \eta_\delta^{\mu+1-2\beta},
\end{align*}
and this tends to zero as $\delta\to0$. 

Now consider what happens on $(0,\delta)$. Here we have
$$
\int_0^\delta w_\delta'(t)^2|E_t|\,dt
=\frac{\delta^{2\alpha}}{\psi(\delta)^2}\int_0^\delta t^{-2\alpha}|E_t|\,dt.
$$
If $|E_t|\le t^\beta$ for all  $t\in(0,\delta)$,
then $|E_t|/t^\alpha\le\psi(t)/t^\alpha\le\psi(\delta)/\delta^\alpha$, 
and so, 
$$
\frac{\delta^{2\alpha}}{\psi(\delta)^2}\int_0^\delta t^{-2\alpha}|E_t|\,dt
\le \frac{\delta^\alpha}{\psi(\delta)}\int_0^\delta t^{-\alpha}\,dt
=\frac{1}{1-\alpha}\frac{\delta}{\psi(\delta)}\le \frac{1}{1-\alpha}.
$$
On the other hand, if $|E_t|\gt t^\beta$ for a sequence $t=\delta_n$ tending to zero, 
then $\psi(\delta_n)=\delta_n^\beta$ for all $n$, and consequently
$$
\frac{\delta_n^{2\alpha}}{\psi(\delta_n)^2}\int_0^{\delta_n} t^{-2\alpha}|E_t|\,dt
\le \frac{\delta_n^{2\alpha}}{\delta_n^{2\beta}}\int_0^{\delta_n}t^{-2\alpha}Ct^\mu\,dt
=\frac{C}{1+\mu-2\alpha}\delta_n^{1+\mu-2\beta},
$$
which tends to zero as $n\to\infty$. Putting all of this together gives (iii).

All that remains is to establish the claim about concavity.
Fix $\gamma\gt2$ with $1-1/\gamma\lt\alpha$.
Our aim is to prove that $t^{1-1/\gamma}w_\delta'(t)$ is decreasing.
On $(0,\delta)$ we have
$$ 
t^{1-1/\gamma}w_\delta'(t)=Ct^{-\nu},
$$
where $\nu:=\alpha+1/\gamma-1\gt0$. This is certainly decreasing. 
On $(\delta,\eta_\delta)$ we have
$$
t^{1-1/\gamma}w_\delta'(t)
=\frac{t^{1-1/\gamma}}{\psi(t)}\Bigl(\int_t^\pi \frac{ds}{\psi(s)}\Bigr)^{-1}
=\frac{t^\alpha}{\psi(t)}\Bigl(t^\nu\int_t^\pi\frac{ds}{\psi(s)}\Bigr)^{-1}.
$$
Now $\psi(t)/t^\alpha$ is increasing. 
Also, the derivative of $t\mapsto t^\nu\int_t^\pi ds/\psi(s)$ 
has the same sign as 
$$ 
\nu\int_t^\pi\frac{ds}{\psi(s)}-\frac{t}{\psi(t)},
$$
which is positive if $t$ is small enough.
Thus $t^{1-1/\gamma}w_\delta'(t)$ is decreasing on $(\delta,\eta_\delta)$
provided that $\delta$ is small enough.
Lastly, at $t=\delta$, 
we need that the left derivative of $w_\delta$ exceeds the right derivative, 
which boils down to the inequality
$$ 
\int_\delta^\pi\frac{ds}{\psi(s)}\ge \frac{1}{1-\alpha},$$
and this certainly holds for small $\delta$,
since the  left-hand side tends to infinity as $\delta\to0$. In summary, 
we have shown that $t^{1-1/\gamma}w'_\delta(t)$ is decreasing on $(0,\pi]$ 
if $\delta$ is small enough. 
The claim about concavity is proved,
and with it, the theorem.\qed

\section{Bergman--Smirnov exceptional sets}\label{S:BS}

There is indirect  evidence for the  Brown--Shields conjecture 
in the form of numerous results about cyclicity in $\cD$, 
due to Brown--Shields and to others, 
all of which are consistent with the conjecture.
However, the first direct progress towards proving the conjecture 
was made by Hedenmalm and Shields in \cite{HS}, 
followed by further contributions by
Richter and Sundberg \cite{RS3} and El-Fallah, Kellay and Ransford \cite{EKR}.
In this section we  briefly describe this work 
and relate it to the results in the present paper.

Hedenmalm and Shields introduced the notion of Bergman--Smirnov exceptional set, 
which is defined as follows.
Let $\DD_e:=\{z\in\CC:|z|\gt1\}$. 
We write $\cB_e$ for the Bergman space on $\DD_e$, 
namely the holomorphic functions 
on $\DD_e$ of the form  $\sum_{k\ge0}b_k/z^{k+1}$
with $\sum_{k\ge0}|b_k|^2/(k+1)\lt\infty$. 
Also we write $\cN^+$ for the Smirnov class, 
namely the holomorphic functions on $\DD$ of the form $h_1/h_2$,
where $h_1,h_2$ are  holomorphic and bounded on $\DD$ and $h_2$ is outer.
A closed set $E\subset\TT$ is called a {\em Bergman--Smirnov exceptional set} 
(or BS-exceptional set for short) 
if it is removable for all holomorphic functions $\phi:\CC\setminus E\to\CC$
such that $\phi|\DD_e\in\cB_e$ and $\phi|\DD\in\cN^+$.

The following theorem explains the interest in BS-exceptional sets. 
It was first proved by Hedenmalm and Shields \cite{HS} 
in the case where $f$ extends continuously to $\overline{\DD}$, 
and the general case was established a little later 
by Richter and Sundberg in \cite{RS3}.

\begin{thm}\label{T:HSRS}
Let $f\in\cD$ be an outer function, 
and set $E:=\{\zeta\in\TT:\liminf_{z\to\zeta}|f(z)|=0\}$.
If $E$ is a Bergman--Smirnov exceptional set, then $f$ is cyclic.
\end{thm}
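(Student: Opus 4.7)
The plan is to argue by contradiction via the Cauchy duality $\cD^*\cong\cB_e$. Assume $f$ is outer, $E$ is Bergman--Smirnov exceptional, and $f$ is not cyclic. Then $[f]\subsetneq\cD$, and Hahn--Banach together with the pairing $\langle g,\phi\rangle:=\sum_k g_k b_k$ (for $g=\sum g_kz^k\in\cD$ and $\phi=\sum_{k\ge0}b_kz^{-k-1}\in\cB_e$, bounded by Cauchy--Schwarz with weights $k+1$ and $1/(k+1)$) produces a nonzero $\phi\in\cB_e$ with $L_\phi(z^n f)=0$ for every $n\ge0$, where $L_\phi(g):=\langle g,\phi\rangle$.

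The heart of the proof is to construct from $\phi$ a function $F$ holomorphic on $\CC\setminus E$ with $F|_{\DD_e}=\phi\in\cB_e$ and $F|_\DD\in\cN^+$. The construction uses that the divided-difference operator $D_\lambda g(z):=(g(z)-g(\lambda))/(z-\lambda)$ sends $\cD$ to $\cD$ for each $\lambda\in\DD$, with norm bound $\|D_\lambda g\|_\cD\le C\|g\|_\cD/(1-|\lambda|)^{1/2}$. Setting $h(\lambda):=L_\phi(D_\lambda f)$ gives a holomorphic function on $\DD$, and the algebraic identity $D_\lambda(z^n f)=(z^{n-1}+\lambda z^{n-2}+\cdots+\lambda^{n-1})f+\lambda^n D_\lambda f$, combined with $L_\phi(z^k f)=0$ for $k\ge0$, yields $L_\phi(D_\lambda(z^n f))=\lambda^n h(\lambda)$. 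This identifies the Taylor coefficients of $h$ as $h_m=\sum_{k\ge0}b_k a_{m+k+1}$, which are precisely the non-negative Fourier coefficients of the formal boundary product $\phi^* f^*$. Since $f$ is outer, it has no zeros in $\DD$, so $F|_\DD:=h/f$ is holomorphic on $\DD$, and a Smirnov-class argument (using that $f$ is outer and $h$ has the controlled growth implied by the divided-difference bound) puts $h/f$ in $\cN^+$.

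Next, one verifies that $F$ is holomorphic across $\TT\setminus E$. At each $\zeta_0\in\TT\setminus E$ the hypothesis $\liminf_{z\to\zeta_0}|f(z)|>0$ provides a neighborhood on which $|f|$ is bounded below, so $1/f$ is locally bounded there. The boundary identity $h=\phi f$, which follows from the construction and the orthogonality relations, lets one match $\phi$ on $\DD_e$ with $h/f$ on $\DD$ across $\zeta_0$ by a Morera or edge-of-the-wedge argument, gluing them into a single holomorphic function across $\zeta_0$. Once $F$ is holomorphic on all of $\CC\setminus E$ with $F|_{\DD_e}\in\cB_e$ and $F|_\DD\in\cN^+$, the Bergman--Smirnov exceptional hypothesis on $E$ yields an entire extension of $F$. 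Since $F|_{\DD_e}=\phi$ vanishes at infinity, Liouville forces $F\equiv0$, hence $\phi\equiv0$, contradicting the choice of $\phi$.

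The main obstacle is the rigorous gluing of $\phi$ and $h/f$ across $\TT\setminus E$, together with the verification $h/f\in\cN^+$. For the Hedenmalm--Shields setting where $f$ is continuous on $\overline\DD$, the lower bound on $|f|$ is uniform on compact sub-arcs of $\TT\setminus E$, $1/f$ is locally bounded and continuous, and both the gluing and the Smirnov-class conclusion follow by classical arguments. In the general Richter--Sundberg case with arbitrary $f\in\cD$ one has only pointwise $\liminf$ lower bounds, and the extension requires finer boundary-behavior results for Dirichlet-space functions (such as existence of nontangential limits off sets of logarithmic capacity zero, and Beurling-type boundary estimates), together with Smirnov's theorem on outer divisors to extract $h/f\in\cN^+$.
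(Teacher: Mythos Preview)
The paper does not prove this theorem; its entire proof reads ``See \cite[Corollary to Theorem~1]{HS} and \cite[Corollary~3.3]{RS3}.'' So there is no in-paper argument to compare against. What you have sketched is, in outline, precisely the argument of Hedenmalm--Shields and Richter--Sundberg: realize $\cD^*$ as the exterior Bergman space $\cB_e$ via the Cauchy pairing, take a nonzero annihilator $\phi$ of $[f]$, use the orthogonality $\langle z^nf,\phi\rangle=0$ to show that the formal boundary product $\phi^*f^*$ has only non-negative Fourier coefficients, and thereby manufacture a function $F$ holomorphic on $\CC\setminus E$ with $F|_{\DD_e}=\phi\in\cB_e$ and $F|_\DD=h/f\in\cN^+$; then invoke the BS-exceptional hypothesis and Liouville. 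Your divided-difference computation recovering $h_m=\sum_k b_k a_{m+k+1}$ is correct and is one clean way to identify $h$.

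Your last paragraph is honest and accurate about where the real work lies. Two comments. First, for $h/f\in\cN^+$: the growth bound $|h(\lambda)|\le C(1-|\lambda|)^{-1/2}$ coming from the divided-difference estimate does not by itself place $h$ in $\cN^+$ (polynomial blow-up at the boundary is compatible with being outside the Smirnov class). The point, rather, is that the vanishing of the negative Fourier coefficients of $\phi^*f^*$ lets one identify $h$ as a genuine $H^p$-type boundary object (in the original papers this is done via Cauchy integrals and the resolvent transform), and then division by the outer $f$ keeps one in $\cN^+$. Second, the gluing across $\TT\setminus E$ in the general (non-continuous) case is exactly the contribution of Richter--Sundberg in \cite{RS3}; it uses the fine boundary behavior of Dirichlet-space functions and is not a routine Morera argument. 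You have correctly flagged both of these as the substantive gaps in a self-contained write-up.
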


\begin{proof}
See \cite[Corollary to Theorem~1]{HS} and \cite[Corollary~3.3]{RS3}.
\end{proof}

This theorem leaves us with the problem 
of identifying exactly which sets are BS-exceptional. 
Hedenmalm and Shields proved that BS-exceptional sets 
are of capacity zero \cite[Lemma~2]{HS}, 
and they asked whether, conversely, 
every closed subset of $\TT$ of capacity zero is BS-exceptional \cite[Problem~4]{HS}. 
This problem is still open, though there are a certain number of partial results, 
which we now describe.

A closed set $E\subset\TT$ has a unique decomposition $E=E^c\cup E^p$, 
where $E^c$ is countable and $E^p$ is perfect (the {\em perfect core} of $E$).
Hedenmalm and Shields  proved that if $E^p$ is BS-exceptional 
then so is $E$ (the converse is obvious). 
In particular, since the empty set is obviously a BS-exceptional set, 
it follows that every countable compact subset of $\TT$ is BS-exceptional. 
For more on this see \cite[Theorem~3]{HS} and the remark that follows it.

Hedenmalm and Shields also showed that 
the union of two disjoint BS-exceptional sets is again BS-exceptional 
\cite[Corollary to Proposition~2]{HS}. 
It seems to be unknown whether one can relax the disjointness hypothesis. 
However, using the technique of the proof of Theorem~\ref{T:Korenblum} above, 
it is possible to show that 
the union of two BS-exceptional sets is  BS-exceptional 
if at least one of them satisfies the Carleson condition \eqref{E:Ccond}. 
We omit the details.

The first examples of  uncountable BS-exceptional sets were given in \cite{EKR}.
It was proved in \cite[Theorem~2.3]{EKR} that $E$ is BS-exceptional whenever
$$
\int_0\frac{|E_t|}{\bigl(t\log(1/t)\log\log(1/t)\bigr)^2}\,dt\lt\infty.
$$
This permits the construction of certain generalized Cantor sets that are BS-exceptional.

To these results, we can now add the following theorem.

\begin{thm}\label{T:BS}
Let $E$ be a closed subset of $\TT$ such that $|E_t|=O(t^\mu)$ for some $\mu\gt0$ and 
$$ \int_0^\pi \frac{dt}{|E_t|}=\infty.$$
Then $E$ is a Bergman--Smirnov exceptional set.
\end{thm}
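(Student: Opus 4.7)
The plan is to deduce Theorem~\ref{T:BS} from the cyclicity result Theorem~\ref{T:main} by constructing, for the given $E$, a cyclic outer function $g \in \cD$ whose zero set on $\TT$ is exactly $E$, and then arguing that cyclicity of such a $g$ forces $E$ to be BS-exceptional. The latter implication is a converse of Theorem~\ref{T:HSRS} implicit in the duality arguments of Hedenmalm--Shields and Richter--Sundberg \cite{HS,RS3}.

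The construction is straightforward. Choose $\beta \in (0, 1/2)$ with $\beta \gt (1-\mu)/2$ (possible since $\mu \gt 0$) and let $g$ be the outer function with $|g^*(\zeta)| = d(\zeta, E)^\beta$ a.e.\ on $\TT$. By Corollary~\ref{C:powers}(i), $g \in \cD$: the relevant integral $\int_0^\pi t^{2\beta-1} N_E(t)\,dt$ is bounded by $\int_0^\pi t^{2\beta-2} |E_t|\,dt$, which is finite because $|E_t| = O(t^\mu)$ and $2\beta + \mu \gt 1$. By construction $\{\zeta \in \TT : \liminf_{z\to\zeta}|g(z)| = 0\} = E$. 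Since the hypotheses of Theorem~\ref{T:main} on $E$ coincide exactly with those of Theorem~\ref{T:BS}, applying Theorem~\ref{T:main} to $g$ yields that $g$ is cyclic in $\cD$.

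The main obstacle is passing from cyclicity of $g$ to BS-exceptionality of $E$. The argument proceeds by duality: given $\phi: \CC \setminus E \to \CC$ in the class defining BS-exceptionality, the membership $\phi|_{\DD_e} \in \cB_e$ identifies the jump of $\phi$ across $\TT$ (supported in $E$, since $\phi$ is analytic off $E$) with a continuous linear functional on $\cD$, via the pairing given by the Cauchy transform. The further property $\phi|_\DD \in \cN^+$ combined with $E_g = E$ ensures this functional annihilates $[g]$. Cyclicity of $g$ --- i.e.\ the existence of polynomials $p_n$ with $p_n g \to 1$ in $\cD$ --- then forces the functional to vanish at $1$, so the jump is trivial and $\phi$ extends analytically across $E$. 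Setting up the pairing rigorously, identifying $\cB_e$ with a dual-type object for $\cD$, and verifying that the Smirnov-class condition does yield annihilation of $[g]$ are the technical steps, and they follow the scheme of \cite[Theorem~1]{HS} and \cite[Section~3]{EKR}.
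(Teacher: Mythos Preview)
Your overall strategy matches the paper's: build a distance function on $E$, show it lies in $\cD$ via Corollary~\ref{C:powers}, show it is cyclic via Theorem~\ref{T:main}, and then pass to BS-exceptionality. The construction and the application of Theorem~\ref{T:main} are fine. The gap is in the last step.

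The paper does not argue the implication ``cyclic $g$ with boundary zero set $E$ $\Rightarrow$ $E$ is BS-exceptional'' from scratch; it invokes \cite[Corollary~3.2]{EKR}, which says that $E$ is BS-exceptional provided there exists a cyclic $f\in\cD$ with $|f^*(\zeta)|\le d(\zeta,E)^2$ a.e. The exponent $2$ there is not decorative: it is exactly what is needed, in the duality argument you allude to, to guarantee that the functional on $\cD$ coming from an arbitrary $\phi$ with $\phi|_{\DD_e}\in\cB_e$ and $\phi|_\DD\in\cN^+$ actually annihilates $[f]$. Accordingly the paper takes $f$ outer with $|f^*(\zeta)|=d(\zeta,E)^2$, checks $f\in\cD$ using part~(ii) of Corollary~\ref{C:powers} (the Carleson condition~\eqref{E:Ccond} holds because $|E_t|=O(t^\mu)$), and then applies Theorem~\ref{T:main} and \cite[Corollary~3.2]{EKR}.

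Your choice $|g^*(\zeta)|=d(\zeta,E)^\beta$ with $\beta<1/2$ does \emph{not} satisfy the hypothesis of \cite[Corollary~3.2]{EKR}, and your sketch does not fill the gap. The sentence ``$\phi|_\DD\in\cN^+$ combined with $E_g=E$ ensures this functional annihilates $[g]$'' is precisely where the decay rate of $g$ near $E$ enters, and you give no reason why $\beta<1/2$ is enough; nor is this converse of Theorem~\ref{T:HSRS} ``implicit'' in \cite{HS,RS3} in the generality you need. The fix is straightforward: take the exponent $2$ instead of $\beta<1/2$, use Corollary~\ref{C:powers}(ii) rather than (i) to get membership in $\cD$, and cite \cite[Corollary~3.2]{EKR} for the passage to BS-exceptionality.
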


\begin{proof}
It was shown in \cite[Corollary~3.2]{EKR} that $E$ is BS-exceptional 
if there exists a cyclic $f\in\cD$ satisfying $|f^*(\zeta)|\le d(\zeta,E)^2$ a.e.
Let $f$ be the outer function satisfying $|f^*|=d(\zeta,E)^2$ a.e.
By Corollary~\ref{C:powers}, applied with $w(t)=t^2$, we have $f\in\cD$.
By Theorem~\ref{T:main} $f$ is cyclic. 
\end{proof}

Using this theorem, we are able to answer the question of Hedenmalm and Shields  
at least in a special case. 
We recall that the notion of generalized Cantor set was defined in \S\ref{S:intro}.

\begin{cor}
Let $E$ be a closed subset of $\TT$ whose perfect core is a generalized Cantor set. 
Then $E$ is a Bergman--Smirnov exceptional set if and only if it is of capacity zero.
\end{cor}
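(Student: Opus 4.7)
The corollary is a direct packaging of results already established, so the plan is essentially an assembly rather than a new argument. For the necessity direction, I would simply cite the result of Hedenmalm and Shields that every Bergman--Smirnov exceptional set has logarithmic capacity zero, which was noted just after the definition of BS-exceptional sets in \S\ref{S:BS}.

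For the sufficiency direction, suppose $E$ has capacity zero. The plan is to pass first to the perfect core $E^p$. Since $E^p\subset E$, it too has capacity zero, and by the hypothesis $E^p$ is a generalized Cantor set in the sense of \S\ref{S:intro}. I would then invoke the second remark following Theorem~\ref{T:main}, which states two things simultaneously for generalized Cantor sets: first, that capacity zero is equivalent to the integral condition $\int_0^\pi dt/|E^p_t|=\infty$, and second, that $|E^p_t|=O(t^\mu)$ as $t\to 0$ with $\mu=1-\log 2/\log(1/\lambda)>0$ (using $\lambda<1/2$ in the definition of a generalized Cantor set). Thus $E^p$ satisfies exactly the two hypotheses required by Theorem~\ref{T:BS}, and that theorem yields immediately that $E^p$ is Bergman--Smirnov exceptional.

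To upgrade from $E^p$ to $E$, I would then use the Hedenmalm--Shields reduction theorem cited in \S\ref{S:BS}, which says that $E$ is BS-exceptional whenever $E^p$ is. This completes sufficiency.

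There is no real obstacle here: the nontrivial content lies in Theorem~\ref{T:BS} and in the equivalence between the integral capacity condition and classical capacity on generalized Cantor sets, both of which are already available. The only small point requiring care is confirming that the generalized Cantor hypothesis transfers to $E^p$ (which is immediate, since by assumption the perfect core already is such a set) and that the capacity of $E^p$ inherits from that of $E$ (which is immediate from monotonicity of capacity under inclusion).
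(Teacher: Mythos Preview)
Your proposal is correct and matches the paper's own proof essentially line for line: necessity via the Hedenmalm--Shields result that BS-exceptional sets have capacity zero, and sufficiency by applying Theorem~\ref{T:BS} to the perfect core $E^p$ (verifying its hypotheses via the remarks after Theorem~\ref{T:main}) and then lifting back to $E$ using the Hedenmalm--Shields reduction. The paper's write-up is terser, but the logical content is identical.
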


\begin{proof}
The `only if' is by  \cite[Lemma~2]{HS}. As for the `if',  
Theorem~\ref{T:BS} applied to the perfect core $E^p$ shows that $E^p$ is BS-exceptional, 
from which it follows that $E$ is too.
\end{proof}

\end{document}